\newtheorem{theorem}{Theorem}
\newtheorem{corollary}{Corollary}
\newtheorem{lemma}{Lemma}
\newtheorem{remark}{Remark}
\newenvironment{proof}[1][Proof]{\noindent\textbf{#1.} }{\ \rule{0.5em}{0.5em}}
\begin{document}

\title{\textbf{Approximation of conjugate functions\ by general linear
operators of their Fourier series at the Lebesgue points}}
\author{\textbf{W\l odzimierz \L enski \ and Bogdan Szal} \\
%EndAName
University of Zielona G\'{o}ra\\
Faculty of Mathematics, Computer Science and Econometrics\\
65-516 Zielona G\'{o}ra, ul. Szafrana 4a, Poland\\
W.Lenski@wmie.uz.zgora.pl, \\
B.Szal @wmie.uz.zgora.pl}
\date{}
\maketitle

\begin{abstract}
The pointwise estimates of the deviations $\widetilde{T}_{n,A,B}^{\text{ }%
}f\left( \cdot \right) -\widetilde{f}(\cdot )$ and $\widetilde{T}_{n,A,B}^{%
\text{ }}f\left( \cdot \right) -\widetilde{f}(\cdot ,\varepsilon )$ in terms
of moduli of continuity $\widetilde{\bar{w}}_{\cdot }f$ and $\widetilde{w}%
_{\cdot }f$ are proved. Analogical results on norm approximation with
remarks and corollary are also given. These results generalized a theorem of
Mittal \cite[Theorem 1, p. 437]{M}

\ \ \ \ \ \ \ \ \ \ \ \ \ \ \ \ \ \ \ \ 

\textbf{Key words: }Rate of approximation, summability of Fourier series,

\ \ \ \ \ \ \ \ \ \ \ \ \ \ \ \ \ \ \ 

\textbf{2000 Mathematics Subject Classification: }42A24.
\end{abstract}

\section{Introduction}

Let $L^{p}\ (1\leq p<\infty )\;\left[ p=\infty \right] $ be the class of all 
$2\pi $--periodic real--valued functions integrable in the Lebesgue sense
with $p$--th power $\left[ \text{essentially bounded}\right] $ over $Q=$ $%
[-\pi ,\pi ]$ with the norm%
\begin{equation*}
\Vert f\Vert :=\Vert f(\cdot )\Vert _{_{L^{p}}}=\left\{ 
\begin{array}{c}
\left( \int_{_{_{Q}}}\mid f(t)\mid ^{p}dt\right) ^{1/p}\text{ \ \ when \ }%
1\leq p<\infty , \\ 
ess\sup\limits_{t\in Q}\mid f(t)\mid \text{ \ when \ }p=\infty 
\end{array}%
\right. 
\end{equation*}%
and consider the trigonometric Fourier series 
\begin{equation*}
Sf(x):=\frac{a_{0}(f)}{2}+\sum_{\nu =1}^{\infty }(a_{\nu }(f)\cos \nu
x+b_{\nu }(f)\sin \nu x)
\end{equation*}%
with the partial sums\ $S_{k}f$ and the conjugate one 
\begin{equation*}
\widetilde{S}f(x):=\sum_{\nu =1}^{\infty }(a_{\nu }(f)\sin \nu x-b_{\nu
}(f)\cos \nu x)
\end{equation*}%
with the partial sums $\widetilde{S}_{k}f$. We know that if $f\in L^{1}$
then 
\begin{equation*}
\widetilde{f}\left( x\right) :=-\frac{1}{\pi }\int_{0}^{\pi }\psi _{x}\left(
t\right) \frac{1}{2}\cot \frac{t}{2}dt=\lim_{\epsilon \rightarrow 0^{+}}%
\widetilde{f}\left( x,\epsilon \right) ,
\end{equation*}%
where 
\begin{equation*}
\widetilde{f}\left( x,\epsilon \right) :=-\frac{1}{\pi }\int_{\epsilon
}^{\pi }\psi _{x}\left( t\right) \frac{1}{2}\cot \frac{t}{2}dt
\end{equation*}%
with 
\begin{equation*}
\psi _{x}\left( t\right) :=f\left( x+t\right) -f\left( x-t\right) ,
\end{equation*}%
exists for almost all \ $x$ \cite[Th.(3.1)IV]{Z}.

Let $A:=\left( a_{n,k}\right) $ and $B:=\left( b_{n,k}\right) $ be infinite
lower triangular matrices of real numbers such that%
\begin{eqnarray*}
a_{n,k} &\geq &0\text{ and }b_{n,k}\geq 0\text{ \ when \ }k=0,1,2,...n\text{%
, \ } \\
a_{n,k} &=&0\text{\ and }b_{n,k}=0\text{ \ when }k>n\text{,}
\end{eqnarray*}%
\begin{equation*}
\sum_{k=0}^{n}a_{n,k}=1\text{ \ and \ }\sum_{k=0}^{n}b_{n,k}=1\text{, where }%
n=0,1,2,...\text{.}
\end{equation*}%
Let define the general linear operators by the\ $AB-$transformation$\ $of \ $%
\left( S_{k}f\right) $ and $\left( \widetilde{S}_{k}f\right) $ as follows%
\begin{equation*}
T_{n,A,B}^{\text{ }}f\left( x\right)
:=\sum\limits_{r=0}^{n}\sum\limits_{k=0}^{r}a_{n,r}b_{r,k}S_{k}f\left(
x\right) \text{\ \ \ }\left( n=0,1,2,...\right)
\end{equation*}%
and%
\begin{equation}
\widetilde{T}_{n,A,B}^{\text{ }}f\left( x\right)
:=\sum\limits_{r=0}^{n}\sum\limits_{k=0}^{r}a_{n,r}b_{r,k}\widetilde{S}%
_{k}f\left( x\right) \text{\ \ \ }\left( n=0,1,2,...\right) .  \label{61}
\end{equation}

As a measure of approximation of $f,\widetilde{f}$ and $\widetilde{f}\left(
\cdot ,\epsilon \right) $ by the above quantities we use the pointwise
moduli of continuity of $f$ in the space $L^{1}$ defined by the formulas%
\begin{equation*}
w_{x}f\left( \delta \right) =\frac{1}{\delta }\int_{0}^{\delta }\left\vert
\varphi _{x}\left( u\right) \right\vert du\text{ and }\widetilde{w}%
_{x}f\left( \delta \right) =\frac{1}{\delta }\int_{0}^{\delta }\left\vert
\psi _{x}\left( u\right) \right\vert du,\text{ }
\end{equation*}%
\begin{equation*}
\bar{w}_{x}f\left( \delta \right) =\sup_{0<t\leq \delta }\frac{1}{t}%
\int_{0}^{t}\left\vert \varphi _{x}\left( u\right) \right\vert du\text{ and }%
\widetilde{\overline{w}}_{x}f\left( \delta \right) =\sup_{0<t\leq \delta }%
\frac{1}{t}\int_{0}^{t}\left\vert \psi _{x}\left( u\right) \right\vert du,
\end{equation*}%
where%
\begin{equation*}
\varphi _{x}\left( t\right) :=f\left( x+t\right) +f\left( x-t\right)
-2f\left( x\right) .
\end{equation*}

It is clear that 
\begin{equation}
\Vert w_{\cdot }f\left( \delta \right) \Vert _{_{L^{p}}}\leq \omega f\left(
\delta \right) _{L^{p}}\text{ and }\Vert \widetilde{w}_{\cdot }f\left(
\delta \right) \Vert _{_{L^{p}}}\leq \widetilde{\omega }f\left( \delta
\right) _{L^{p}},  \label{81}
\end{equation}%
where 
\begin{equation*}
\omega f\left( \delta \right) _{L^{p}}=\sup_{0<t\leq \delta }\Vert \varphi
_{\cdot }\left( t\right) \Vert _{_{L^{p}}}\text{ and }\widetilde{\omega }%
f\left( \delta \right) _{L^{p}}=\sup_{0<t\leq \delta }\Vert \psi _{\cdot
}\left( t\right) \Vert _{_{L^{p}}}
\end{equation*}%
are the classical moduli of continuity of $f$.

The deviation $T_{n,A,B}^{\text{ }}f-f$ with lower triangular infinite
matrix $A$, defined by $a_{n,k}=\frac{1}{n+1}$ \ when \ $k=0,1,2,...n$ and $%
a_{n,k}=0$ \ when $k>n$, was estimated by M. L. Mittal as follows:

\noindent \textbf{Theorem A. }\cite[Theorem 1, p. 437]{M} \textit{If entries
of matrix }$A$ \textit{satisfy the conditions}

\begin{equation*}
a_{n,n-k}-a_{n+1,n+1-k}\geq 0\text{\ \ \textit{for} \ }0\leq k\leq n,
\end{equation*}%
\begin{equation*}
\sum_{k=0}^{l}\text{ }\left( k+1\right) \left\vert
a_{r,r-k}-a_{r,r-k-1}\right\vert =O\left( \sum_{k=r-l}^{r}a_{r,k}\right) 
\text{ \ \ \textit{for} \ }0\leq l\leq r\leq n,
\end{equation*}%
\begin{eqnarray*}
&&\sum_{k=0}^{r}\text{ }\left( k+1\right) \left\vert \left(
a_{r,r-k}-a_{r+1,r+1-k}\right) -\left( a_{r,r-k-1}-a_{r+1,r-k}\right)
\right\vert \\
&=&O\left( \frac{1}{r+1}\right) \text{ \ \ \ \ \textit{for} \ }0\leq r\leq n
\end{eqnarray*}%
\textit{and}%
\begin{equation*}
a_{n,0}-a_{n+1,1}=O\left( \left( n+1\right) ^{-2}\right)
\end{equation*}%
\textit{then, for} $x$ \ \textit{such that} 
\begin{equation*}
\frac{1}{t}\int_{0}^{t}\left\vert \varphi _{x}\left( u\right) \right\vert
du=o_{x}\left( 1\right) \text{ \ \ \textit{as }\ }t\rightarrow 0+\text{ ,}
\end{equation*}%
\textit{we have the relation}%
\begin{equation*}
\frac{1}{n+1}\sum\limits_{r=0}^{n}\sum\limits_{k=0}^{r}a_{r,k}S_{k}f\left(
x\right) -f(x)=o_{x}\left( 1\right) \text{ \ \ \textit{as }\ }n\rightarrow
\infty .
\end{equation*}

The deviation $T_{n,A,B}^{\text{ }}f-f$ was also estimated in our earlier
paper as follows:

\noindent \textbf{Theorem B. }\cite{LSz}\textit{\ Let \ }$f\in L^{1}$\textit{%
. If entries of our matrices satisfy the conditions} 
\begin{equation*}
a_{n,n}\ll \frac{1}{n+1},
\end{equation*}%
\begin{equation*}
\frac{1}{s+1}\sum\limits_{r=0}^{s}a_{n,r}\ll a_{n,s}\text{ \ \textit{for} \ }%
0\leq s\leq n
\end{equation*}%
\textit{and}%
\begin{equation*}
\left\vert a_{n,r}b_{r,r-l}-a_{n,r+1}b_{r+1,r+1-l}\right\vert \ll \frac{%
a_{n,r}}{\left( r+1\right) ^{2}}\text{ for }0\leq l\leq r\leq n-1,
\end{equation*}%
\textit{then}%
\begin{equation*}
\left\vert T_{n,A,B}^{\text{ }}f\left( x\right) -f\left( x\right)
\right\vert \ll \sum\limits_{r=0}^{n}a_{n,r}\left[ \frac{1}{r+1}%
\sum\limits_{k=0}^{r}\bar{w}_{x}f\left( \frac{\pi }{k+1}\right) \right] ,
\end{equation*}%
\begin{equation*}
\left\Vert T_{n,A,B}^{\text{ }}f\left( \cdot \right) -f\left( \cdot \right)
\right\Vert _{L^{p}}\ll \sum\limits_{r=0}^{n}a_{n,r}\left[ \frac{1}{r+1}%
\sum\limits_{k=0}^{r}\omega f\left( \frac{\pi }{k+1}\right) _{L^{p}}\right] ,
\end{equation*}%
\textit{for every natural }$n$\textit{\ and all real\ }$x.$

In the case of conjugate functions the deviation $\sum%
\limits_{k=0}^{n}a_{n,k}\widetilde{S}_{k}f\left( x\right) -\widetilde{f}(x)$
was considered by M. L. Mittal, B. E. Rhoades and V. N. Mishra in \cite{MRM}
in the following way

\noindent \textbf{Theorem C. }\cite[Theorem 3.1]{MRM}\textit{\ Let \ }$f\in
L^{p}\left( p\geq 1\right) $\textit{\ such that}%
\begin{equation*}
\int_{0}^{2\pi }\left\vert \left[ f\left( x+t\right) -f\left( x\right) %
\right] \sin ^{\beta }x\right\vert ^{p}dx=O\left( \xi \left( t\right)
\right) \text{ \ \ }\left( \beta \geq 0\right) \text{ }
\end{equation*}%
\textit{provided that }$\xi \left( t\right) $\textit{\ is positive,
increasing function of }$t$\textit{\ satisfying the conditions}%
\begin{equation*}
\left\{ \int_{0}^{\pi /n}\left( \frac{t\left\vert \psi _{x}\left( t\right)
\right\vert }{\xi \left( t\right) }\right) ^{p}\sin ^{\beta p}tdt\right\}
^{1/p}=O_{x}\left( \frac{1}{n}\right)
\end{equation*}%
\begin{equation*}
\left\{ \int_{\pi /n}^{\pi }\left( \frac{t^{-\delta }\left\vert \psi
_{x}\left( t\right) \right\vert }{\xi \left( t\right) }\right)
^{p}dt\right\} ^{1/p}=O_{x}\left( n^{\delta }\right)
\end{equation*}%
\textit{uniformly in }$x$\textit{, where }$\delta <1/p,$\textit{\ but the
function }$\frac{\xi \left( t\right) }{t}$\textit{\ decreasing in }$t.$%
\textit{\ If the entries of matrix }$A$\textit{\ satisfy the condition}%
\begin{equation*}
\sum_{k=0}^{l}\text{ }\left( k+1\right) \left\vert
a_{r,r-k}-a_{r,r-k-1}\right\vert =O\left( \sum_{k=r-l}^{r}a_{r,k}\right) 
\text{ \ \ \textit{for} \ }0\leq l\leq r\leq n,
\end{equation*}%
\textit{then}%
\begin{equation*}
\left\Vert \sum\limits_{k=0}^{n}a_{n,k}\widetilde{S}_{k}f\left( \cdot
\right) -\widetilde{f}(\cdot )\right\Vert _{L^{p}}=O\left( n^{\beta +\frac{1%
}{p}}\xi \left( \frac{1}{n}\right) \right) .
\end{equation*}

In our theorems we will consider the deviations $\ \widetilde{T}_{n,A,B}^{%
\text{ }}f\left( \cdot \right) -\widetilde{f}(\cdot )$ and $\widetilde{T}%
_{n,A,B}^{\text{ }}f\left( \cdot \right) -\widetilde{f}(\cdot ,\varepsilon )$
with the mean (\ref{61}) introduced at the begin and we will present the
estimates of the above type. Consequently, we also give some results on norm
approximation and some remarks. Finally, we will derive a corollary.

We shall write $I_{1}\ll I_{2}$ if there exists a positive constant $K$,
sometimes depend on some parameters, such that $I_{1}\leq KI_{2}$.

\section{Statement of the results}

We start with our main results on the degrees of pointwise summability.

\begin{theorem}
Let \ $f\in L^{1}$. If entries of our matrices satisfy the conditions%
\begin{equation}
a_{n,n}\ll \frac{1}{n+1},  \label{2.1}
\end{equation}%
\begin{equation}
\frac{1}{s+1}\sum\limits_{r=0}^{s}a_{n,r}\ll a_{n,s}\text{ \ \textit{for} \ }%
0\leq s\leq n  \label{2.2}
\end{equation}%
and%
\begin{equation}
\left\vert a_{n,r}b_{r,r-l}-a_{n,r+1}b_{r+1,r+1-l}\right\vert \ll \frac{%
a_{n,r}}{\left( r+1\right) ^{2}}\text{ for }0\leq l\leq r\leq n-1,
\label{2.21}
\end{equation}%
then \ 
\begin{equation}
\left\vert \widetilde{T}_{n,A,B}^{\text{ }}f\left( x\right) -\widetilde{f}%
\left( x,\frac{\pi }{n+1}\right) \right\vert \ll \sum\limits_{r=0}^{n}a_{n,r}%
\left[ \frac{1}{r+1}\sum\limits_{k=0}^{r}\widetilde{\bar{w}}_{x}f\left( 
\frac{\pi }{k+1}\right) \right]  \label{2.51}
\end{equation}%
and under the additional condition 
\begin{equation}
\frac{1}{\pi }\int_{0}^{\frac{\pi }{n+1}}\frac{\left\vert \psi _{x}\left(
t\right) \right\vert }{t}dt\ll \widetilde{w}_{x}f\left( \frac{\pi }{n+1}%
\right) ,  \label{2.511}
\end{equation}%
\begin{equation}
\left\vert \widetilde{T}_{n,A,B}^{\text{ }}f\left( x\right) -\widetilde{f}%
\left( x\right) \right\vert \ll \sum\limits_{r=0}^{n}a_{n,r}\left[ \frac{1}{%
r+1}\sum\limits_{k=0}^{r}\widetilde{\bar{w}}_{x}f\left( \frac{\pi }{k+1}%
\right) \right]  \label{2.5}
\end{equation}%
for every natural $n$ and all considered real\ $x.$
\end{theorem}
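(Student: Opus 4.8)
The plan is to follow the scheme of Theorem B with the conjugate Dirichlet kernel in place of the Dirichlet kernel. Write $\widetilde{D}_{k}(t)=\sum_{\nu=1}^{k}\sin\nu t=\frac{\cos(t/2)-\cos((k+1/2)t)}{2\sin(t/2)}$, so that $\widetilde{S}_{k}f(x)=-\frac{1}{\pi}\int_{0}^{\pi}\psi_{x}(t)\widetilde{D}_{k}(t)\,dt$ and $\widetilde{D}_{k}(t)-\frac{1}{2}\cot\frac{t}{2}=-\frac{\cos((k+1/2)t)}{2\sin(t/2)}$. Since $\sum_{r=0}^{n}\sum_{k=0}^{r}a_{n,r}b_{r,k}=1$, splitting the arising integral at $\pi/(n+1)$ gives
\[
\widetilde{T}_{n,A,B}f(x)-\widetilde{f}\Bigl(x,\tfrac{\pi}{n+1}\Bigr)=-\frac{1}{\pi}\int_{0}^{\pi/(n+1)}\psi_{x}(t)\,\widetilde{K}_{n}(t)\,dt+\frac{1}{\pi}\int_{\pi/(n+1)}^{\pi}\psi_{x}(t)\,\frac{\widetilde{R}_{n}(t)}{2\sin(t/2)}\,dt=:I_{1}+I_{2},
\]
where $\widetilde{K}_{n}(t)=\sum_{r=0}^{n}\sum_{k=0}^{r}a_{n,r}b_{r,k}\widetilde{D}_{k}(t)$ and $\widetilde{R}_{n}(t)=\sum_{r=0}^{n}\sum_{k=0}^{r}a_{n,r}b_{r,k}\cos((k+1/2)t)$. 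Moreover $\widetilde{f}(x,\frac{\pi}{n+1})-\widetilde{f}(x)=\frac{1}{\pi}\int_{0}^{\pi/(n+1)}\psi_{x}(t)\frac{1}{2}\cot\frac{t}{2}\,dt$ with $0\le\frac{1}{2}\cot\frac{t}{2}\le\frac{1}{t}$ on $(0,\pi]$, so (\ref{2.5}) will follow from (\ref{2.51}) together with (\ref{2.511}) and the monotonicity of $\widetilde{\bar{w}}_{x}f$; thus the main work is (\ref{2.51}).

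For $I_{1}$ it suffices to use that $|\widetilde{D}_{k}(t)|\le\sum_{\nu=1}^{k}\nu t\le\frac{\pi n}{2}$ when $0\le t\le\pi/(n+1)$ and $k\le n$, whence $|\widetilde{K}_{n}(t)|\ll n$ on $[0,\pi/(n+1)]$ and $|I_{1}|\ll n\int_{0}^{\pi/(n+1)}|\psi_{x}(t)|\,dt\ll\widetilde{w}_{x}f(\tfrac{\pi}{n+1})\le\widetilde{\bar{w}}_{x}f(\tfrac{\pi}{n+1})$. Since $\widetilde{\bar{w}}_{x}f$ is non-decreasing, for every $r\le n$ one has $\frac{1}{r+1}\sum_{k=0}^{r}\widetilde{\bar{w}}_{x}f(\tfrac{\pi}{k+1})\ge\widetilde{\bar{w}}_{x}f(\tfrac{\pi}{n+1})$; multiplying by $a_{n,r}$ and summing (the $a_{n,r}$ sum to $1$) shows $|I_{1}|$ is dominated by the right-hand side of (\ref{2.51}).

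The core is $I_{2}$, which requires an estimate of $\widetilde{R}_{n}(t)$ that retains the $t$-dependence. I would regroup the double sum along the anti-diagonals $l=r-k$, so $\widetilde{R}_{n}(t)=\sum_{l=0}^{n}\sum_{r=l}^{n}a_{n,r}b_{r,r-l}\cos((r-l+1/2)t)$, and apply Abel's transformation in $r$ on each diagonal. The differences that occur are exactly $a_{n,r}b_{r,r-l}-a_{n,r+1}b_{r+1,r+1-l}$, estimated by $a_{n,r}/(r+1)^{2}$ via (\ref{2.21}), and the boundary term carries $a_{n,n}b_{n,n-l}$, estimated via (\ref{2.1}); combining these with $\bigl|\sum_{i=0}^{j}\cos((i+1/2)t)\bigr|=\bigl|\frac{\sin((j+1)t)}{2\sin(t/2)}\bigr|\le\min(j+1,\frac{\pi}{2t})$, summing over $l$, re-indexing by $r$, and using $\sum_{i=1}^{r+1}\min(i,\frac{\pi}{2t})\ll(r+1)\min(r+1,\frac{1}{t})$, I expect to reach
\[
|\widetilde{R}_{n}(t)|\ll\sum_{r=0}^{n}a_{n,r}\min\Bigl(1,\frac{1}{t(r+1)}\Bigr),\qquad 0<t\le\pi.
\]
Carrying out this double Abel summation and the accompanying $\min$-bookkeeping without losing the $t$-localization is the step I expect to be the main obstacle.

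Feeding this bound into $I_{2}$, using $\frac{1}{2\sin(t/2)}\le\frac{\pi}{2t}$ and splitting the $t$-integral at $\pi/(r+1)$, the estimate reduces to bounding $\int_{\pi/(n+1)}^{\pi/(r+1)}\frac{|\psi_{x}(t)|}{t}\,dt$ and $\frac{1}{r+1}\int_{\pi/(r+1)}^{\pi}\frac{|\psi_{x}(t)|}{t^{2}}\,dt$. Integration by parts against $\Psi_{x}(t)=\int_{0}^{t}|\psi_{x}(u)|\,du=t\,\widetilde{w}_{x}f(t)$, followed by the dyadic decomposition of $[\pi/(n+1),\pi]$ into the intervals $[\pi/(m+1),\pi/m]$ (on which $\widetilde{\bar{w}}_{x}f(t)\le\widetilde{\bar{w}}_{x}f(\pi/m)$, $\int\frac{dt}{t^{2}}=\frac{1}{\pi}$ and $\int\frac{dt}{t}\le\frac{1}{m}$), gives
\[
|I_{2}|\ll\sum_{r=0}^{n}a_{n,r}\Bigl[\frac{1}{r+1}\sum_{k=0}^{r}\widetilde{\bar{w}}_{x}f\bigl(\tfrac{\pi}{k+1}\bigr)\Bigr]+\sum_{r=0}^{n}a_{n,r}\sum_{m=r+1}^{n}\frac{\widetilde{\bar{w}}_{x}f(\pi/m)}{m}.
\]
Interchanging the order of summation in the second term and invoking (\ref{2.2}) in the form $\sum_{r=0}^{m-1}a_{n,r}\ll m\,a_{n,m-1}$ bounds it by $\sum_{r=0}^{n}a_{n,r}\widetilde{\bar{w}}_{x}f(\tfrac{\pi}{r+1})$, which by the monotonicity used for $I_{1}$ is again dominated by the right-hand side of (\ref{2.51}). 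This proves (\ref{2.51}); adding $\widetilde{f}(x,\frac{\pi}{n+1})-\widetilde{f}(x)$ and estimating $|\widetilde{f}(x,\frac{\pi}{n+1})-\widetilde{f}(x)|\le\frac{1}{\pi}\int_{0}^{\pi/(n+1)}\frac{|\psi_{x}(t)|}{t}\,dt\ll\widetilde{w}_{x}f(\tfrac{\pi}{n+1})\le\widetilde{\bar{w}}_{x}f(\tfrac{\pi}{n+1})$ by (\ref{2.511}), which as above is dominated by the right-hand side, yields (\ref{2.5}).
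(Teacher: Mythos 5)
Your proposal is correct and follows essentially the same route as the paper: the same split of the deviation at $\pi/(n+1)$ into $I_{1}+I_{2}$, Abel's transformation along the diagonals $l=r-k$ with (\ref{2.21}) controlling the differences and (\ref{2.1}) the boundary term, integration by parts against $\int_{0}^{t}\left\vert \psi _{x}\left( u\right) \right\vert du$ with the dyadic decomposition of $[\pi/(n+1),\pi]$, and (\ref{2.2}) to absorb $\sum_{r\leq s}a_{n,r}$; the step you flag as the main obstacle does go through and yields your bound $\left\vert \widetilde{R}_{n}\left( t\right) \right\vert \ll \sum_{r}a_{n,r}\min \left( 1,\frac{1}{t\left( r+1\right) }\right) $ (plus a harmless boundary contribution $\min \left( 1,\frac{1}{t\left( n+1\right) }\right) $, which is absorbed the same way). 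The only cosmetic difference is that the paper splits the kernel sum at the $t$-dependent index $\tau =\left[ \pi /t\right] $ into three pieces $I_{21},I_{22},I_{23}$ instead of packaging the two regimes into a single $\min$-estimate, and it derives (\ref{2.5}) by redoing the decomposition with $\widetilde{D_{k}^{\circ }}$ rather than by adding the tail integral as you do; both variants use (\ref{2.511}) in the same way.
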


\begin{remark}
We can observe that the proof of Theorem 1 yields the following more precise
estimate%
\begin{equation}
\left. 
\begin{array}{c}
\left\vert \widetilde{T}_{n,A,B}^{\text{ }}f\left( x\right) -\widetilde{f}%
\left( x,\frac{\pi }{n+1}\right) \right\vert \\ 
\left\vert \widetilde{T}_{n,A,B}^{\text{ }}f\left( x\right) -\widetilde{f}%
\left( x\right) \right\vert%
\end{array}%
\right\} \ll \sum\limits_{r=0}^{n}\left( a_{n,r}+\sum_{k=1}^{r}\frac{a_{n,k}%
}{r+1}\right) \left[ \frac{1}{r+1}\sum\limits_{k=0}^{r}\widetilde{w}%
_{x}f\left( \frac{\pi }{k+1}\right) \right]  \notag
\end{equation}%
\begin{equation}
+\left[ \frac{1}{n+1}\sum\limits_{k=0}^{n}\widetilde{w}_{x}f\left( \frac{\pi 
}{k+1}\right) \right] ,  \label{2.6}
\end{equation}%
without assumption (\ref{2.2}). In this case we can obtain the relation%
\begin{equation*}
\left. 
\begin{array}{c}
\left\vert \widetilde{T}_{n,A,B}^{\text{ }}f\left( x\right) -\widetilde{f}%
\left( x,\frac{\pi }{n+1}\right) \right\vert \\ 
\left\vert \widetilde{T}_{n,A,B}^{\text{ }}f\left( x\right) -\widetilde{f}%
\left( x\right) \right\vert%
\end{array}%
\right\} =o_{x}\left( 1\right)
\end{equation*}%
under weaker assumption $\sum_{r=0}^{n}\sum_{k=0}^{r}\frac{a_{n,k}}{r+1}%
=O\left( 1\right) $ instead of (\ref{2.2}) with (\ref{2.1}), (\ref{2.21})
and $\widetilde{w}_{x}f\left( \delta \right) =o_{x}\left( 1\right) .$
\end{remark}

If we suppose at the beginning that the matrix\ $A$ is such that $a_{n,k}=%
\frac{1}{n+1}$ when $k=0,1,2,...n$ and $a_{n,k}=0$ when $k>n,$ then we can
yet reduce our assumptions.

\begin{theorem}
Let \ $f\in L^{1}$ and let entries of the matrix $B$ satisfy the condition 
\begin{equation}
\left\vert b_{r,r-l}-b_{r+1,r+1-l}\right\vert \ll \frac{1}{\left( r+1\right)
^{2}}\text{ for }0\leq l\leq r.  \label{3.2}
\end{equation}%
Then%
\begin{equation*}
\left\vert \widetilde{T}_{n,\left( \frac{1}{n+1}\right) ,B}^{\text{ }%
}f\left( x\right) -\widetilde{f}\left( x,\frac{\pi }{n+1}\right) \right\vert
\ll \frac{1}{n+1}\text{ }\sum\limits_{r=0}^{n}\left[ \frac{1}{r+1}%
\sum\limits_{k=0}^{r}\widetilde{w}_{x}f\left( \frac{\pi }{k+1}\right) \right]
\end{equation*}%
and under (\ref{2.511}) 
\begin{equation}
\left\vert \widetilde{T}_{n,\left( \frac{1}{n+1}\right) ,B}^{\text{ }%
}f\left( x\right) -\widetilde{f}\left( x\right) \right\vert \ll \frac{1}{n+1}%
\text{ }\sum\limits_{r=0}^{n}\left[ \frac{1}{r+1}\sum\limits_{k=0}^{r}%
\widetilde{w}_{x}f\left( \frac{\pi }{k+1}\right) \right] ,  \label{3.4}
\end{equation}%
for every natural $n$ and all considered real\ $x.$
\end{theorem}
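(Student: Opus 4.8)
The plan is to deduce Theorem 2 as a special case of Theorem 1 (specifically, of the more precise estimate (\ref{2.6}) in Remark 2) applied to the matrix $A$ with $a_{n,k}=\frac{1}{n+1}$ for $0\le k\le n$. First I would verify that this choice of $A$ satisfies hypotheses (\ref{2.1}) and (\ref{2.2}): condition (\ref{2.1}) is immediate since $a_{n,n}=\frac{1}{n+1}$, and condition (\ref{2.2}) holds because $\frac{1}{s+1}\sum_{r=0}^{s}a_{n,r}=\frac{1}{s+1}\cdot\frac{s+1}{n+1}=\frac{1}{n+1}=a_{n,s}$, so it holds with constant $1$. It remains to check the coupling condition (\ref{2.21}). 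Here
\[
\left\vert a_{n,r}b_{r,r-l}-a_{n,r+1}b_{r+1,r+1-l}\right\vert
=\frac{1}{n+1}\left\vert b_{r,r-l}-b_{r+1,r+1-l}\right\vert
\ll\frac{1}{n+1}\cdot\frac{1}{(r+1)^{2}}=\frac{a_{n,r}}{(r+1)^{2}},
\]
using precisely the hypothesis (\ref{3.2}) of Theorem 2. Thus all three hypotheses of Theorem 1 are met.

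Next I would simply invoke Theorem 1 (together with the refinement in Remark 2). For the first inequality of Theorem 2, no extra assumption is needed; for the second, the additional hypothesis (\ref{2.511}) is exactly the one required in Theorem 1 for passing from $\widetilde{f}\left(x,\frac{\pi}{n+1}\right)$ to $\widetilde{f}(x)$, so it carries over verbatim. Substituting $a_{n,r}=\frac{1}{n+1}$ into the right-hand side of (\ref{2.51}) (respectively (\ref{2.5})) gives directly
\[
\sum_{r=0}^{n}a_{n,r}\left[\frac{1}{r+1}\sum_{k=0}^{r}\widetilde{\bar w}_{x}f\left(\frac{\pi}{k+1}\right)\right]
=\frac{1}{n+1}\sum_{r=0}^{n}\left[\frac{1}{r+1}\sum_{k=0}^{r}\widetilde{\bar w}_{x}f\left(\frac{\pi}{k+1}\right)\right].
\]
Finally, I would replace $\widetilde{\bar w}_{x}f$ by $\widetilde{w}_{x}f$ as in the statement of Theorem 2: for the form (\ref{2.6}) of the estimate the bound is already stated in terms of $\widetilde{w}_{x}f$, and with $a_{n,k}=\frac{1}{n+1}$ the prefactor $a_{n,r}+\sum_{k=1}^{r}\frac{a_{n,k}}{r+1}=\frac{1}{n+1}+\frac{r}{(r+1)(n+1)}\ll\frac{1}{n+1}$, while the stray term $\frac{1}{n+1}\sum_{k=0}^{n}\widetilde{w}_{x}f\left(\frac{\pi}{k+1}\right)$ is clearly absorbed into $\frac{1}{n+1}\sum_{r=0}^{n}\bigl[\frac{1}{r+1}\sum_{k=0}^{r}\widetilde{w}_{x}f\left(\frac{\pi}{k+1}\right)\bigr]$ by taking the $r=n$ summand and noting monotonicity is not even needed. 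This yields both displayed inequalities of Theorem 2.

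I do not anticipate a genuine obstacle here: the content of Theorem 2 is entirely contained in Theorem 1, and the only thing to watch is the bookkeeping that turns (\ref{3.2}) into (\ref{2.21}) via the factor $\frac{1}{n+1}=a_{n,r}$, and that the auxiliary term in (\ref{2.6}) is harmless under the Cesàro choice of $A$. Everything else is direct substitution, so the proof is short.
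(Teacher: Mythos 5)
Your overall route is exactly the paper's: the authors dispose of Theorem 2 in one line by declaring it a special case of Theorem 1 with $a_{n,k}=\frac{1}{n+1}$, and your verifications of (\ref{2.1}), of (\ref{2.2}) with constant $1$, and of (\ref{2.21}) from (\ref{3.2}) via the factor $\frac{1}{n+1}=a_{n,r}$ are all correct. You also correctly spot the point the paper glosses over, namely that Theorem 2 is stated with $\widetilde{w}_{x}f$ rather than the larger $\widetilde{\bar{w}}_{x}f$ appearing in (\ref{2.51}) and (\ref{2.5}), so one must route through the refined estimate (\ref{2.6}) (it is Remark 1, not Remark 2, but that is immaterial), whose prefactor you correctly reduce to $O\left(\frac{1}{n+1}\right)$ under the Ces\`{a}ro choice of $A$.

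The one step whose justification fails is the absorption of the stray term $S:=\frac{1}{n+1}\sum_{k=0}^{n}\widetilde{w}_{x}f\left(\frac{\pi }{k+1}\right)$ into the target $T:=\frac{1}{n+1}\sum_{r=0}^{n}\left[ \frac{1}{r+1}\sum_{k=0}^{r}\widetilde{w}_{x}f\left(\frac{\pi }{k+1}\right) \right]$. The $r=n$ summand of $T$ is $\frac{1}{n+1}\cdot \frac{1}{n+1}\sum_{k=0}^{n}\widetilde{w}_{x}f\left(\frac{\pi }{k+1}\right) =\frac{S}{n+1}$, so "taking the $r=n$ summand" only gives $S\leq \left(n+1\right) T$, which is off by an unbounded factor; and since $\widetilde{w}_{x}f$ (unlike $\widetilde{\bar{w}}_{x}f$) need not be monotone, you cannot wave this away. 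The correct repair is the first inequality of Lemma 2: $\widetilde{w}_{x}f\left(\frac{\pi }{k+1}\right) \leq \frac{2}{k+1}\sum_{j=0}^{k}\widetilde{w}_{x}f\left(\frac{\pi }{j+1}\right)$ for each $k$; summing over $k=0,\dots ,n$ and dividing by $n+1$ yields $S\leq 2T$. With that substitution your argument is complete and coincides with the paper's intended proof.
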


\begin{remark}
Analyzing the proof of Theorem 1 with $A=\left( \frac{1}{n+1}\right) $ we
can see that under the following additional assumption on the entries of the
matrix $B$%
\begin{equation*}
\sum\limits_{r=s}^{n-1}\sum\limits_{k=s}^{r}\left\vert
b_{r,r-k}-b_{r+1,r+1-k}\right\vert \ll 1\text{ \ for }0<s\leq r\leq n-1,
\end{equation*}%
we obtain the more precise estimate 
\begin{equation*}
\left\vert \widetilde{T}_{n,\left( \frac{1}{n+1}\right) ,B}^{\text{ }%
}f\left( x\right) -\widetilde{f}\left( x\right) \right\vert \ll \frac{1}{n+1}%
\text{ }\sum\limits_{r=0}^{n}\widetilde{w}_{x}f\left( \frac{\pi }{r+1}\right)
\end{equation*}%
than (\ref{3.4}). This additional assumption as well (\ref{3.2}) are
fulfilled if $b_{n,k}=0$ for $k=0,1,2,...,n-1,n+1,...$ and $b_{n,n}=1.$ Thus
we have analogue of the well known classical result of S. Aljan\v{c}i\v{c},
R. Bojanic and M. Tomi\'{c} \cite{ABT}. The analogical remark we can prepare
with respect to Theorem 1.
\end{remark}

Finally, we formulate the results on the estimates of $L^{p}$norm of the
deviation considered above.

\begin{theorem}
Let \ $f\in L^{p}$. Under the assumptions of Theorem 1 on the entries of
matrices $A$ and $B$ and $\widetilde{\omega }f$, we have 
\begin{equation*}
\left. 
\begin{array}{c}
\left\Vert \widetilde{T}_{n,A,B}^{\text{ }}f\left( \cdot \right) -\widetilde{%
f}\left( \cdot ,\frac{\pi }{n+1}\right) \right\Vert _{L^{p}} \\ 
\left\Vert \widetilde{T}_{n,A,B}^{\text{ }}f\left( \cdot \right) -\widetilde{%
f}\left( \cdot \right) \right\Vert _{L^{p}}%
\end{array}%
\right\} \ll \sum\limits_{r=0}^{n}a_{n,r}\left[ \frac{1}{r+1}%
\sum\limits_{k=0}^{r}\widetilde{\omega }f\left( \frac{\pi }{k+1}\right)
_{L^{p}}\right] ,
\end{equation*}%
for every natural $n.$
\end{theorem}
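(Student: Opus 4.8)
The plan is to derive Theorem~3 from the pointwise work already carried out for Theorem~1, the bridge being the generalized Minkowski inequality together with (\ref{81}). The one subtlety is that the argument must be run through the sharper form of the pointwise estimate recorded in Remark~1, i.e.\ through (\ref{2.6}), which is phrased in terms of the modulus $\widetilde{w}_{x}f$ (the one \emph{without} a supremum) rather than through the form of Theorem~1 written with $\widetilde{\bar{w}}_{x}f$.

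First I would go back into the proof of Theorem~1. There one writes $\widetilde{S}_{k}f(x)=-\frac{1}{\pi}\int_{0}^{\pi}\psi_{x}(t)\widetilde{D}_{k}(t)\,dt$, with $\widetilde{D}_{k}$ the conjugate Dirichlet kernel, and $\widetilde{f}(x,\frac{\pi}{n+1})=-\frac{1}{\pi}\int_{\pi/(n+1)}^{\pi}\psi_{x}(t)\frac{1}{2}\cot\frac{t}{2}\,dt$, so that
\begin{equation*}
\widetilde{T}_{n,A,B}f(x)-\widetilde{f}\Big(x,\frac{\pi}{n+1}\Big)=-\frac{1}{\pi}\int_{0}^{\pi}\psi_{x}(t)\,\Phi_{n}(t)\,dt ,
\end{equation*}
where the kernel $\Phi_{n}$ is assembled from $\widetilde{D}_{k}$ and $\frac{1}{2}\cot\frac{t}{2}$ with the coefficients $a_{n,r}b_{r,k}$ and does not depend on $x$; the passage to $\widetilde{f}(x)$ merely adds the extra integral $-\frac{1}{\pi}\int_{0}^{\pi/(n+1)}\psi_{x}(t)\frac{1}{2}\cot\frac{t}{2}\,dt$, which in Theorem~1 is disposed of by (\ref{2.511}). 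The splitting of $[0,\pi]$ at $\pi/(n+1)$ and the Abel transformations in the indices $r$ and $k$ (governed by (\ref{2.21}), with the boundary terms controlled by (\ref{2.1})) are identities in the matrix entries and, crucially, are performed \emph{before} any norm is taken; they reduce the right-hand side to a finite, $x$--free linear combination of the integrals $\int_{0}^{\pi/(k+1)}|\psi_{x}(u)|\,du=\frac{\pi}{k+1}\,\widetilde{w}_{x}f(\frac{\pi}{k+1})$, with exactly the coefficients displayed in (\ref{2.6}).

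The key step is to take the $L^{p}(Q)$--norm at this point and invoke the generalized Minkowski inequality to move it inside the $t$--integral: since the coefficients are constants, this replaces, term by term, each $\int_{0}^{\delta}|\psi_{x}(u)|\,du$ by $\int_{0}^{\delta}\|\psi_{\cdot}(u)\|_{L^{p}}\,du\leq\int_{0}^{\delta}\widetilde{\omega}f(u)_{L^{p}}\,du\leq\delta\,\widetilde{\omega}f(\delta)_{L^{p}}$, the first inequality being (\ref{81}) and the last the monotonicity of $\widetilde{\omega}f(\cdot)_{L^{p}}$. Likewise the extra integral arising in the $\widetilde{f}(\cdot)$ case is handled by the $L^{p}$--form of (\ref{2.511})---this is exactly the hypothesis ``on $\widetilde{\omega}f$'' transported from Theorem~1---again via (\ref{81}). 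What results is precisely the $L^{p}$--analogue of (\ref{2.6}), i.e.\ the bound $\sum_{r=0}^{n}(a_{n,r}+\sum_{k=1}^{r}\frac{a_{n,k}}{r+1})[\frac{1}{r+1}\sum_{k=0}^{r}\widetilde{\omega}f(\frac{\pi}{k+1})_{L^{p}}]+\frac{1}{n+1}\sum_{k=0}^{n}\widetilde{\omega}f(\frac{\pi}{k+1})_{L^{p}}$ for both deviations. Finally I would run the same arithmetic by which Theorem~1 is deduced from (\ref{2.6}): (\ref{2.2}) gives $\sum_{k=1}^{r}\frac{a_{n,k}}{r+1}\ll a_{n,r}$, and, since $k\mapsto\widetilde{\omega}f(\frac{\pi}{k+1})_{L^{p}}$ is nonincreasing, the bracket $\frac{1}{r+1}\sum_{k=0}^{r}\widetilde{\omega}f(\frac{\pi}{k+1})_{L^{p}}$ is nonincreasing in $r$, so the leftover term (its value at $r=n$) is majorised by $\sum_{r=0}^{n}a_{n,r}[\frac{1}{r+1}\sum_{k=0}^{r}\widetilde{\omega}f(\frac{\pi}{k+1})_{L^{p}}]$ because $\sum_{r=0}^{n}a_{n,r}=1$. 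This gives exactly the stated estimate.

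The step I expect to cause the most trouble is the bookkeeping around the norm interchange. One must apply the generalized Minkowski inequality to a genuine integral of $\psi_{\cdot}(t)$ against an $x$--free kernel, hence only after all the Abel summations have been carried out; and one must resist the temptation to take norms directly in the conclusion of Theorem~1, because the supremum over $t$ hidden in $\widetilde{\bar{w}}_{x}f(\delta)$ does not commute with $\|\cdot\|_{L^{p}}$ (a ``moving bump'' example shows $\|\widetilde{\bar{w}}_{\cdot}f(\delta)\|_{L^{p}}$ need not be controlled by any multiple of $\widetilde{\omega}f(\delta)_{L^{p}}$), which is exactly why the argument must pass through the $\widetilde{w}$--form (\ref{2.6}). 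One must also make sure the near-origin contribution genuinely admits an $L^{p}$ substitute for (\ref{2.511}). Apart from these points the proof is a transcription of the proof of Theorem~1 with $|\psi_{x}(\cdot)|$ replaced by $\|\psi_{\cdot}(\cdot)\|_{L^{p}}$.
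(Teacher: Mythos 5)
Your proposal is correct and follows essentially the same route as the paper, which derives Theorem 3 precisely from the refined pointwise estimate (\ref{2.6}) of Remark 1 together with (\ref{81}) (the generalized Minkowski step), and then absorbs the extra terms via (\ref{2.2}) and $\sum_{r=0}^{n}a_{n,r}=1$. Your observation that one must pass through the $\widetilde{w}$--form (\ref{2.6}) rather than take norms in the $\widetilde{\bar{w}}$--form of Theorem 1 is exactly the point the paper's one-line proof is relying on.
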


\begin{theorem}
Let \ $f\in L^{p}$ ,\ then, under the assumptions of Theorem 2 on the
entries of matrix $B$ and $\widetilde{\omega }f$, we have 
\begin{equation*}
\left. 
\begin{array}{c}
\left\Vert \widetilde{T}_{n,\left( \frac{1}{n+1}\right) ,B}^{\text{ }%
}f\left( \cdot \right) -\widetilde{f}\left( \cdot ,\frac{\pi }{n+1}\right)
\right\Vert _{L^{p}} \\ 
\left\Vert \widetilde{T}_{n,\left( \frac{1}{n+1}\right) ,B}^{\text{ }%
}f\left( \cdot \right) -\widetilde{f}\left( \cdot \right) \right\Vert
_{L^{p}}%
\end{array}%
\right\} \ll \frac{1}{n+1}\sum\limits_{r=0}^{n}\left[ \frac{1}{r+1}%
\sum\limits_{k=0}^{r}\widetilde{\omega }f\left( \frac{\pi }{k+1}\right)
_{L^{p}}\right] ,
\end{equation*}%
for every natural $n.$
\end{theorem}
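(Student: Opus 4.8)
The plan is to deduce Theorem 5 from the \emph{pointwise} estimate of Theorem 2 by integrating in the variable $x$, together with \eqref{81} and Minkowski's inequality; no new summation argument is needed. (Alternatively one may observe that with $a_{n,k}=\frac{1}{n+1}$ for $0\le k\le n$ conditions \eqref{2.1} and \eqref{2.2} hold trivially and \eqref{2.21} reduces to \eqref{3.2}, so that Theorem 5 is just the case $A=\left(\frac{1}{n+1}\right)$ of Theorem 4; I describe the direct route below, since it does not rely on the precise form in which Theorem 4 is phrased.)

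First I would bound $\widetilde{T}_{n,\left(\frac{1}{n+1}\right),B}f(\cdot)-\widetilde{f}\left(\cdot,\frac{\pi}{n+1}\right)$. Since $f\in L^{p}\subset L^{1}$, the first estimate of Theorem 2 holds for almost every $x$, that is, $\bigl|\widetilde{T}_{n,\left(\frac{1}{n+1}\right),B}f(x)-\widetilde{f}\left(x,\frac{\pi}{n+1}\right)\bigr|\ll\frac{1}{n+1}\sum_{r=0}^{n}\frac{1}{r+1}\sum_{k=0}^{r}\widetilde{w}_{x}f\left(\frac{\pi}{k+1}\right)$. Taking $\Vert\cdot\Vert_{L^{p}}$ of both sides and using the triangle inequality for $\Vert\cdot\Vert_{L^{p}}$ to pull the norm through the two finite sums, the right-hand side is $\ll\frac{1}{n+1}\sum_{r=0}^{n}\frac{1}{r+1}\sum_{k=0}^{r}\bigl\Vert\widetilde{w}_{\cdot}f\left(\frac{\pi}{k+1}\right)\bigr\Vert_{L^{p}}$, and then \eqref{81} replaces each $\bigl\Vert\widetilde{w}_{\cdot}f\left(\frac{\pi}{k+1}\right)\bigr\Vert_{L^{p}}$ by $\widetilde{\omega}f\left(\frac{\pi}{k+1}\right)_{L^{p}}$, which is the first claimed inequality.

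Next I would handle $\widetilde{T}_{n,\left(\frac{1}{n+1}\right),B}f(\cdot)-\widetilde{f}(\cdot)$ through the splitting $\widetilde{T}_{n,\left(\frac{1}{n+1}\right),B}f(\cdot)-\widetilde{f}(\cdot)=\Bigl[\widetilde{T}_{n,\left(\frac{1}{n+1}\right),B}f(\cdot)-\widetilde{f}\left(\cdot,\frac{\pi}{n+1}\right)\Bigr]+\Bigl[\widetilde{f}\left(\cdot,\frac{\pi}{n+1}\right)-\widetilde{f}(\cdot)\Bigr]$, the first bracket being estimated as above. For the second, the definitions of $\widetilde{f}$ and $\widetilde{f}(\cdot,\epsilon)$ give $\widetilde{f}\left(x,\frac{\pi}{n+1}\right)-\widetilde{f}(x)=\frac{1}{\pi}\int_{0}^{\pi/(n+1)}\psi_{x}(t)\,\frac{1}{2}\cot\frac{t}{2}\,dt$, hence, using $\bigl|\frac{1}{2}\cot\frac{t}{2}\bigr|\ll\frac{1}{t}$ on $(0,\pi]$, $\bigl|\widetilde{f}\left(x,\frac{\pi}{n+1}\right)-\widetilde{f}(x)\bigr|\ll\int_{0}^{\pi/(n+1)}\frac{|\psi_{x}(t)|}{t}\,dt$. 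Taking $\Vert\cdot\Vert_{L^{p}}$, applying Minkowski's inequality in integral form and then the definition of $\widetilde{\omega}f$, this is $\ll\int_{0}^{\pi/(n+1)}\frac{\widetilde{\omega}f(t)_{L^{p}}}{t}\,dt$; the $L^{p}$-mean analogue of \eqref{2.511} — which is the ``assumption on $\widetilde{\omega}f$'' referred to in the statement — then bounds this by $\widetilde{\omega}f\left(\frac{\pi}{n+1}\right)_{L^{p}}$. Finally, since $\widetilde{\omega}f$ is nondecreasing, $\widetilde{\omega}f\left(\frac{\pi}{n+1}\right)_{L^{p}}\le\widetilde{\omega}f\left(\frac{\pi}{k+1}\right)_{L^{p}}$ for $0\le k\le r\le n$, so $\widetilde{\omega}f\left(\frac{\pi}{n+1}\right)_{L^{p}}\le\frac{1}{n+1}\sum_{r=0}^{n}\frac{1}{r+1}\sum_{k=0}^{r}\widetilde{\omega}f\left(\frac{\pi}{k+1}\right)_{L^{p}}$; adding the two contributions gives the second claimed inequality.

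I do not expect a serious obstacle here, since all the real work is contained in Theorem 2 and the remaining steps are routine. The points that need care are that $\Vert\cdot\Vert_{L^{p}}$ does not commute with the pointwise supremum defining $\widetilde{\bar w}_{x}f$, which is why one should work with the averaged modulus $\widetilde{w}_{x}f$ occurring in Theorem 2 (not with $\widetilde{\bar w}_{x}f$) and invoke \eqref{81}; and that in the $\widetilde{f}(\cdot)$ case the hypothesis on $f$ must be read as the $L^{p}$-mean version of \eqref{2.511}, so that the tail $\widetilde{f}\left(\cdot,\frac{\pi}{n+1}\right)-\widetilde{f}(\cdot)$ can be absorbed into the main term. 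Should one prefer the Theorem 4 route, the only thing to check is the elementary implication that $a_{n,k}=\frac{1}{n+1}$ together with \eqref{3.2} satisfies \eqref{2.1}, \eqref{2.2} and \eqref{2.21}, after which the right-hand side of Theorem 4 is exactly that of Theorem 5.
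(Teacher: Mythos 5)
Your proposal is correct and matches the paper's (very terse) argument: the authors likewise obtain Theorems 3 and 4 by integrating the pointwise estimate with the averaged modulus $\widetilde{w}_{x}f$ (via the triangle/Minkowski inequality) and then invoking (\ref{81}), exactly as you do. Your remarks about why one must avoid $\widetilde{\bar{w}}_{x}f$ and about reading the hypothesis on $\widetilde{\omega }f$ as the $L^{p}$ analogue of (\ref{2.511}) correctly identify the only delicate points, which the paper leaves implicit.
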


\begin{corollary}
Under the assumptions of our Theorems we have the relations%
\begin{eqnarray*}
\left. 
\begin{array}{c}
\left\vert \widetilde{T}_{n,A,B}^{\text{ }}f\left( x\right) -\widetilde{f}%
\left( x,\frac{\pi }{n+1}\right) \right\vert \\ 
\left\vert \widetilde{T}_{n,A,B}^{\text{ }}f\left( x\right) -\widetilde{f}%
\left( x\right) \right\vert%
\end{array}%
\right\} &=&o_{x}\left( 1\right) ,\text{ \ a.e. in \ }x, \\
\left. 
\begin{array}{c}
\left\vert \widetilde{T}_{n,\left( \frac{1}{n+1}\right) ,B}^{\text{ }%
}f\left( x\right) -\widetilde{f}\left( x,\frac{\pi }{n+1}\right) \right\vert
\\ 
\left\vert \widetilde{T}_{n,\left( \frac{1}{n+1}\right) ,B}^{\text{ }%
}f\left( x\right) -\widetilde{f}\left( x\right) \right\vert%
\end{array}%
\right\} &=&o_{x}\left( 1\right) ,\text{\ a.e. in \ }x,
\end{eqnarray*}%
such that $\widetilde{w}_{x}f\left( t\right) =o_{x}\left( 1\right) $ \ \ 
\textit{as }\ $t\rightarrow 0+,$ and 
\begin{eqnarray*}
\left. 
\begin{array}{c}
\left\Vert \widetilde{T}_{n,A,B}^{\text{ }}f\left( \cdot \right) -\widetilde{%
f}\left( \cdot ,\frac{\pi }{n+1}\right) \right\Vert _{L^{p}} \\ 
\left\Vert \widetilde{T}_{n,A,B}^{\text{ }}f\left( \cdot \right) -\widetilde{%
f}\left( \cdot \right) \right\Vert _{L^{p}}%
\end{array}%
\right\} &=&o\left( 1\right) , \\
\left. 
\begin{array}{c}
\left\Vert \widetilde{T}_{n,\left( \frac{1}{n+1}\right) ,B}^{\text{ }%
}f\left( \cdot \right) -\widetilde{f}\left( \cdot ,\frac{\pi }{n+1}\right)
\right\Vert _{L^{p}} \\ 
\left\Vert \widetilde{T}_{n,\left( \frac{1}{n+1}\right) ,B}^{\text{ }%
}f\left( \cdot \right) -\widetilde{f}\left( \cdot \right) \right\Vert
_{L^{p}}%
\end{array}%
\right\} &=&o\left( 1\right) ,
\end{eqnarray*}%
when $n\rightarrow \infty .$
\end{corollary}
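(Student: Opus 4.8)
The plan is to derive the Corollary from Theorems 1--4 by a Toeplitz-type (regular summability) argument. Each of those theorems bounds the deviation in question by a double average of the numbers $c_{k}:=\widetilde{\overline{w}}_{x}f\!\left(\tfrac{\pi}{k+1}\right)$ --- or, in the norm statements, $c_{k}:=\widetilde{\omega}f\!\left(\tfrac{\pi}{k+1}\right)_{L^{p}}$ --- so it suffices to show that an expression of the form $\sum_{r=0}^{n}a_{n,r}\big[\tfrac{1}{r+1}\sum_{k=0}^{r}c_{k}\big]$ tends to $0$ whenever $c_{k}\to 0$ and $\sup_{k}c_{k}<\infty$, and likewise for the special mean $\tfrac{1}{n+1}\sum_{r=0}^{n}\big[\tfrac{1}{r+1}\sum_{k=0}^{r}c_{k}\big]$.

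First I would record the elementary facts that make the hypotheses effective. For $f\in L^{1}$, at every Lebesgue point $x$ --- hence for a.e.\ $x$ --- one has $\widetilde{w}_{x}f(t)=\tfrac{1}{t}\int_{0}^{t}|\psi_{x}(u)|\,du\to 0$ as $t\to 0^{+}$, and since $\widetilde{\overline{w}}_{x}f(\delta)=\sup_{0<t\le\delta}\widetilde{w}_{x}f(t)$ is nondecreasing in $\delta$ it follows that $\widetilde{\overline{w}}_{x}f(\delta)\to 0$ as well; moreover $\widetilde{\overline{w}}_{x}f(\pi)<\infty$ a.e.\ because $\int_{0}^{\pi}|\psi_{x}(u)|\,du<\infty$ a.e.\ (Fubini, $f\in L^{1}$). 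In the norm case, for $1\le p<\infty$ the continuity of translation in $L^{p}$ gives $\widetilde{\omega}f(\delta)_{L^{p}}\to 0$ as $\delta\to 0$, with $\widetilde{\omega}f(\pi)_{L^{p}}\le 2\|f\|_{L^{p}}<\infty$. Thus in each case $(c_{k})$ is bounded and $c_{k}\to 0$, so its Ces\`{a}ro means $d_{r}:=\tfrac{1}{r+1}\sum_{k=0}^{r}c_{k}$ satisfy $d_{r}\to 0$ and $\sup_{r}d_{r}<\infty$.

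Next I would show that the weights $\{a_{n,r}\}_{r\le n}$ furnish a regular averaging process, i.e.\ $\sum_{r=0}^{m}a_{n,r}\to 0$ as $n\to\infty$ for each fixed $m$. This is precisely where condition (\ref{2.2}) is used: writing it as $\tfrac{1}{s+1}\sum_{r=0}^{s}a_{n,r}\le K\,a_{n,s}$, one obtains for $m\le s\le n$ the inequality $a_{n,s}\ge\tfrac{1}{K(s+1)}\sum_{r=0}^{m}a_{n,r}$; summing over $s$ from $m$ to $n$ and invoking $\sum_{s=0}^{n}a_{n,s}=1$ gives $\sum_{r=0}^{m}a_{n,r}\le K\big(\sum_{s=m}^{n}\tfrac{1}{s+1}\big)^{-1}\ll 1/\log(n/m)\to 0$. (For $A=(\tfrac{1}{n+1})$ this is trivial and (\ref{2.2}) holds automatically, which is why Theorems 2 and 4 need no analogue of it.) Granting this, fix $\varepsilon>0$, choose $m$ with $d_{r}<\varepsilon$ for $r>m$, and split $\sum_{r=0}^{n}a_{n,r}d_{r}$ at the index $m$: bounding $d_{r}$ by $\sup_{r}d_{r}$ when $r\le m$ and by $\varepsilon$ when $r>m$, and using $\sum_{r>m}a_{n,r}\le 1$, we get $\sum_{r=0}^{n}a_{n,r}d_{r}\le\big(\sup_{r}d_{r}\big)\sum_{r=0}^{m}a_{n,r}+\varepsilon$; letting $n\to\infty$ and then $\varepsilon\to 0$ yields $\sum_{r=0}^{n}a_{n,r}d_{r}\to 0$. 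For $A=(\tfrac{1}{n+1})$ the right-hand sides in Theorems 2 and 4 reduce to the single Ces\`{a}ro mean $\tfrac{1}{n+1}\sum_{r=0}^{n}d_{r}$, which tends to $0$ because $d_{r}\to 0$.

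Finally, combining these limits with the estimates (\ref{2.51}) and (\ref{2.5}), with the corresponding bounds of Theorem 2, and with Theorems 3 and 4, we obtain $\widetilde{T}_{n,A,B}f(x)-\widetilde{f}(x,\tfrac{\pi}{n+1})=o_{x}(1)$ and --- under the additional hypothesis (\ref{2.511}) --- $\widetilde{T}_{n,A,B}f(x)-\widetilde{f}(x)=o_{x}(1)$, together with the stated $L^{p}$ relations $=o(1)$ as $n\to\infty$; the pointwise relations hold for a.e.\ $x$ precisely because $\widetilde{w}_{x}f(t)=o_{x}(1)$ for a.e.\ $x$. The only step that is not a routine Toeplitz splitting is the regularity estimate $\sum_{r=0}^{m}a_{n,r}\to 0$ extracted from (\ref{2.2}); I expect that to be the main obstacle, and essentially the only one.
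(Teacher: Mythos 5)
Your argument is correct, and it is worth noting that the paper itself states this Corollary with no written proof at all (Section 4 proves only Theorems 1--4), so what you have produced is precisely the verification the authors leave implicit. Both of your supporting steps are sound: at every Lebesgue point $\widetilde{w}_{x}f(t)\rightarrow 0$, hence $\widetilde{\overline{w}}_{x}f(\delta )=\sup_{0<t\leq \delta }\widetilde{w}_{x}f(t)\rightarrow 0$ and is bounded on $(0,\pi ]$, while in norm $\widetilde{\omega }f(\delta )_{L^{p}}\rightarrow 0$ by continuity of translation; and your extraction of the regularity property $\sum_{r=0}^{m}a_{n,r}\ll 1/\log (n/m)\rightarrow 0$ from condition (\ref{2.2}) combined with $\sum_{s=0}^{n}a_{n,s}=1$ is a correct and genuinely nontrivial piece of bookkeeping that the paper never writes down, after which the standard Toeplitz splitting finishes the pointwise and norm cases alike, the case $A=(\frac{1}{n+1})$ being the trivial Ces\`{a}ro instance. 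Two small caveats: the $L^{p}$ conclusions require $p<\infty $ (or $f$ continuous when $p=\infty $), a restriction the paper is equally silent about; and the authors' Remark 1 suggests they had in mind reaching the $o_{x}(1)$ relations through the sharper estimate (\ref{2.6}) under the weaker hypothesis $\sum_{r=0}^{n}\sum_{k=0}^{r}\frac{a_{n,k}}{r+1}=O(1)$, which is an alternative to, but not at odds with, your route through (\ref{2.2}). In short, your proposal is correct and, if anything, more explicit than anything in the paper.
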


\section{Auxiliary results}

We begin\ this section by some notations following A. Zygmund \cite[Section
5 of Chapter II]{Z}.

It is clear that 
\begin{equation*}
\widetilde{S}_{k}f\left( x\right) =-\frac{1}{\pi }\int_{-\pi }^{\pi }f\left(
x+t\right) \widetilde{D}_{k}\left( t\right) dt,
\end{equation*}%
and 
\begin{equation*}
\widetilde{T}_{n,A,B}^{\text{ }}f\left( x\right) =-\frac{1}{\pi }\int_{-\pi
}^{\pi }f\left( x+t\right)
\sum\limits_{r=0}^{n}\sum\limits_{k=0}^{r}a_{n,r}b_{r,k}\widetilde{D_{k}}%
\left( t\right) dt,
\end{equation*}%
where 
\begin{equation*}
\text{\ }\widetilde{D_{k}}\left( t\right) =\sum_{\nu =0}^{k}\sin \nu t=\frac{%
\cos \frac{t}{2}-\cos \frac{\left( 2k+1\right) t}{2}}{2\sin \frac{t}{2}}=%
\frac{2\sin \frac{kt}{2}\sin \frac{\left( k+1\right) t}{2}}{2\sin \frac{t}{2}%
}.
\end{equation*}%
Hence 
\begin{eqnarray*}
\widetilde{T}_{n,A,B}^{\text{ }}f\left( x\right) -\widetilde{f}\left( x,%
\frac{\pi }{n+1}\right) &=&-\frac{1}{\pi }\int_{0}^{\frac{\pi }{n+1}}\psi
_{x}\left( t\right) \sum\limits_{r=0}^{n}\sum\limits_{k=0}^{r}a_{n,r}b_{r,k}%
\widetilde{D_{k}}\left( t\right) dt \\
&&+\frac{1}{\pi }\int_{\frac{\pi }{n+1}}^{\pi }\psi _{x}\left( t\right)
\sum\limits_{r=0}^{n}\sum\limits_{k=0}^{r}a_{n,r}b_{r,k}\widetilde{%
D_{k}^{\circ }}\left( t\right) dt
\end{eqnarray*}%
and 
\begin{equation*}
\widetilde{T}_{n,A,B}^{\text{ }}f\left( x\right) -\widetilde{f}\left(
x\right) =\frac{1}{\pi }\int_{0}^{\pi }\psi _{x}\left( t\right)
\sum\limits_{r=0}^{n}\sum\limits_{k=0}^{r}a_{n,r}b_{r,k}\widetilde{%
D_{k}^{\circ }}\left( t\right) dt,
\end{equation*}%
where 
\begin{equation*}
\widetilde{D_{k}^{\circ }}\left( t\right) =\frac{1}{2}\cot \frac{t}{2}%
-\sum_{\nu =0}^{k}\sin \nu t=\frac{\cos \frac{\left( 2k+1\right) t}{2}}{%
2\sin \frac{t}{2}}.
\end{equation*}%
Now, we formulate some estimates for the conjugate Dirichlet kernels.

\begin{lemma}
(see \cite[Section 5 of Chapter II, p. 51]{Z}) If \ $0<\left\vert
t\right\vert \leq \pi /2,$ then 
\begin{equation*}
\left\vert \widetilde{D_{k}^{\circ }}\left( t\right) \right\vert \leq \frac{%
\pi }{2\left\vert t\right\vert }\text{\ \ and \ }\left\vert \widetilde{D_{k}}%
\left( t\right) \right\vert \leq \frac{\pi }{\left\vert t\right\vert }
\end{equation*}%
but for any real\ $t$ we have 
\begin{equation*}
\left\vert \widetilde{D_{k}}\left( t\right) \right\vert \leq \frac{1}{2}%
k\left( k+1\right) \left\vert t\right\vert \text{ \ and \ \ }\left\vert 
\widetilde{D_{k}}\left( t\right) \right\vert \leq k+1.
\end{equation*}
\end{lemma}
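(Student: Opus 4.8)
The plan is to read everything off the closed forms recorded just before the statement, namely
\begin{equation*}
\widetilde{D_{k}}\left( t\right) =\frac{\cos \frac{t}{2}-\cos \frac{\left( 2k+1\right) t}{2}}{2\sin \frac{t}{2}}=\sum_{\nu =0}^{k}\sin \nu t\qquad \text{and}\qquad \widetilde{D_{k}^{\circ }}\left( t\right) =\frac{\cos \frac{\left( 2k+1\right) t}{2}}{2\sin \frac{t}{2}},
\end{equation*}
together with the elementary (Jordan) inequality $\left\vert \sin u\right\vert \geq \frac{2}{\pi }\left\vert u\right\vert $ valid for $\left\vert u\right\vert \leq \pi /2$.

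First I would treat the two estimates on $0<\left\vert t\right\vert \leq \pi /2$. Here $u=t/2$ satisfies $\left\vert u\right\vert \leq \pi /4\leq \pi /2$, so $2\left\vert \sin \frac{t}{2}\right\vert \geq \frac{2}{\pi }\left\vert t\right\vert $. Bounding the cosines in the numerators by $1$ then gives at once $\left\vert \widetilde{D_{k}^{\circ }}\left( t\right) \right\vert \leq \frac{1}{2\left\vert \sin \frac{t}{2}\right\vert }\leq \frac{\pi }{2\left\vert t\right\vert }$, and, using the triangle inequality in the numerator of $\widetilde{D_{k}}$, $\left\vert \widetilde{D_{k}}\left( t\right) \right\vert \leq \frac{\left\vert \cos \frac{t}{2}\right\vert +\left\vert \cos \frac{\left( 2k+1\right) t}{2}\right\vert }{2\left\vert \sin \frac{t}{2}\right\vert }\leq \frac{2}{2\left\vert \sin \frac{t}{2}\right\vert }\leq \frac{\pi }{\left\vert t\right\vert }$, which are the two asserted inequalities.

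Next, for the last two estimates, which must hold for every real $t$, I would abandon the quotient form and use instead the finite sum $\widetilde{D_{k}}\left( t\right) =\sum_{\nu =0}^{k}\sin \nu t$. Applying the two trivial termwise bounds $\left\vert \sin \nu t\right\vert \leq \nu \left\vert t\right\vert $ and $\left\vert \sin \nu t\right\vert \leq 1$ and summing over $\nu =1,\dots ,k$ (the $\nu =0$ term vanishes) yields $\left\vert \widetilde{D_{k}}\left( t\right) \right\vert \leq \left\vert t\right\vert \sum_{\nu =1}^{k}\nu =\frac{1}{2}k\left( k+1\right) \left\vert t\right\vert $ and $\left\vert \widetilde{D_{k}}\left( t\right) \right\vert \leq \sum_{\nu =1}^{k}1=k\leq k+1$.

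There is essentially no genuine obstacle here: the only point requiring care is to keep $t/2$ inside the range on which Jordan's inequality is valid, which is precisely why the first pair of estimates carries the restriction $\left\vert t\right\vert \leq \pi /2$, whereas the last pair is unconditional because it rests only on $\left\vert \sin u\right\vert \leq 1$ and $\left\vert \sin \nu t\right\vert \leq \nu \left\vert t\right\vert $. Alternatively, one may simply cite these four inequalities from Zygmund's book \cite{Z} as indicated in the statement.
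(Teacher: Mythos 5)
Your proposal is correct: the paper gives no proof of its own for this lemma, merely citing Zygmund, and your elementary argument (Jordan's inequality $\left\vert \sin \frac{t}{2}\right\vert \geq \frac{\left\vert t\right\vert }{\pi }$ applied to the closed forms for the first pair of estimates, and the termwise bounds $\left\vert \sin \nu t\right\vert \leq \min \left( 1,\nu \left\vert t\right\vert \right) $ on the sum $\sum_{\nu =0}^{k}\sin \nu t$ for the second pair) is exactly the standard argument behind that citation. All four inequalities check out as you derive them, so nothing further is needed.
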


More complicated estimates we give with proofs.

\begin{lemma}
Let \ $f\in L^{1}$. The following inequalities 
\begin{eqnarray*}
\widetilde{w}_{x}f\left( \frac{\pi }{n+1}\right)  &\leq &2\frac{1}{n+1}%
\sum_{r=0}^{n}\widetilde{w}_{x}f\left( \frac{\pi }{r+1}\right)  \\
\widetilde{\overline{w}}_{x}f\left( \frac{\pi }{n+1}\right)  &\leq &\frac{1}{%
n+1}\sum_{r=0}^{n}\widetilde{\overline{w}}_{x}f\left( \frac{\pi }{r+1}%
\right) 
\end{eqnarray*}%
hold for every naturals $n$ and all real $x$.
\end{lemma}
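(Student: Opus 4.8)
The plan is to reduce both inequalities to a single elementary monotonicity-plus-averaging fact about the function $r \mapsto \widetilde{w}_x f\!\left(\frac{\pi}{r+1}\right)$ (respectively $\widetilde{\overline w}_x f$). First I would record that $\widetilde{\overline w}_x f(\delta)$ is nondecreasing in $\delta$ by its very definition as a supremum over $0<t\le\delta$; hence $\widetilde{\overline w}_x f\!\left(\frac{\pi}{r+1}\right)$ is nonincreasing in $r$. Consequently each of the $n+1$ terms $\widetilde{\overline w}_x f\!\left(\frac{\pi}{r+1}\right)$ with $0\le r\le n$ is at least $\widetilde{\overline w}_x f\!\left(\frac{\pi}{n+1}\right)$, so summing and dividing by $n+1$ gives the second inequality immediately, with constant $1$.

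The first inequality is the genuinely substantive one, since $\widetilde{w}_x f(\delta)=\frac1\delta\int_0^\delta|\psi_x(u)|\,du$ need not be monotone in $\delta$. The idea is to split the integral defining $\widetilde{w}_x f\!\left(\frac{\pi}{n+1}\right)$ over the dyadic-type blocks $\left(\frac{\pi}{r+2},\frac{\pi}{r+1}\right]$ for $r=0,1,\dots,n-1$ together with the leftover piece $\left(0,\frac{\pi}{n+1}\right]$. On the block $\left(\frac{\pi}{r+2},\frac{\pi}{r+1}\right]$ I would bound $\int_{\pi/(r+2)}^{\pi/(r+1)}|\psi_x(u)|\,du \le \int_0^{\pi/(r+1)}|\psi_x(u)|\,du = \frac{\pi}{r+1}\,\widetilde{w}_x f\!\left(\frac{\pi}{r+1}\right)$, and on the leftover piece $\int_0^{\pi/(n+1)}|\psi_x(u)|\,du = \frac{\pi}{n+1}\,\widetilde{w}_x f\!\left(\frac{\pi}{n+1}\right)$. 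Then
\[
\frac{\pi}{n+1}\,\widetilde{w}_x f\!\left(\tfrac{\pi}{n+1}\right) = \int_0^{\pi/(n+1)}|\psi_x| \le \sum_{r=0}^{n-1}\int_{\pi/(r+2)}^{\pi/(r+1)}|\psi_x| + \int_0^{\pi/(n+1)}|\psi_x|,
\]
which is circular as written; instead I would directly estimate, using the blocks that tile $\left(0,\frac{\pi}{1}\right]$ minus nothing — the cleaner route is to note $\frac{\pi}{n+1}\le\frac{\pi}{r+1}$ for $r\le n$ and write
\[
\frac{1}{n+1}\sum_{r=0}^{n}\widetilde{w}_x f\!\left(\tfrac{\pi}{r+1}\right) = \frac{1}{n+1}\sum_{r=0}^{n}\frac{r+1}{\pi}\int_0^{\pi/(r+1)}\!\!|\psi_x(u)|\,du \,\ge\, \frac{1}{\pi}\int_0^{\pi/(n+1)}\!\!|\psi_x(u)|\,du\cdot\frac{1}{n+1}\sum_{r=0}^n (r+1)\cdot\frac{1}{\ ?}
\]
— so the careful version: each summand with $r\le n$ satisfies $\widetilde{w}_x f\!\left(\frac{\pi}{r+1}\right)\ge \frac{1}{\pi/(r+1)}\int_0^{\pi/(n+1)}|\psi_x| = \frac{r+1}{\pi}\int_0^{\pi/(n+1)}|\psi_x|$, and averaging $\frac{r+1}{\pi}$ over $r=0,\dots,n$ gives roughly $\frac{n+2}{2\pi}$, while $\widetilde{w}_x f\!\left(\frac{\pi}{n+1}\right)=\frac{n+1}{\pi}\int_0^{\pi/(n+1)}|\psi_x|$; the ratio is $\frac{n+1}{(n+2)/2}\le 2$, which is exactly the factor $2$ in the statement.

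The main obstacle, then, is precisely the non-monotonicity of $\widetilde{w}_x f$: one cannot argue termwise as in the $\widetilde{\overline w}$ case, and must exploit the explicit $\frac1\delta$ normalization to transfer a lower bound on $\int_0^{\pi/(r+1)}|\psi_x|$ (namely that it dominates $\int_0^{\pi/(n+1)}|\psi_x|$ since $r\le n$) into a lower bound on the average of the $\widetilde{w}_x f\!\left(\frac{\pi}{r+1}\right)$. I would present this as the chain: for $0\le r\le n$, $\ \widetilde{w}_x f\!\left(\frac{\pi}{r+1}\right) = \frac{r+1}{\pi}\int_0^{\pi/(r+1)}|\psi_x(u)|\,du \ge \frac{r+1}{\pi}\int_0^{\pi/(n+1)}|\psi_x(u)|\,du$, then sum over $r$, use $\sum_{r=0}^{n}(r+1)=\frac{(n+1)(n+2)}{2}\ge\frac{(n+1)^2}{2}$, and divide by $n+1$ to get $\frac{1}{n+1}\sum_{r=0}^n\widetilde{w}_x f\!\left(\frac{\pi}{r+1}\right)\ge\frac{n+1}{2\pi}\int_0^{\pi/(n+1)}|\psi_x(u)|\,du = \frac12\,\widetilde{w}_x f\!\left(\frac{\pi}{n+1}\right)$, which rearranges to the claimed bound. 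Both parts hold for all real $x$ and all $n$ with no integrability issues beyond $f\in L^1$, which guarantees $\psi_x$ is locally integrable for almost every $x$ — and in fact the pointwise statement is purely about the function $|\psi_x(\cdot)|$ being integrable near $0$, so it is valid at every $x$ where $\widetilde{w}_x f$ is finite.
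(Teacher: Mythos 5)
Your final argument is correct and is essentially the paper's own proof read in the opposite direction: both rest on the two facts $\int_{0}^{\pi /(n+1)}|\psi _{x}|\leq \int_{0}^{\pi /(r+1)}|\psi _{x}|$ for $r\leq n$ and $\sum_{r=0}^{n}(r+1)=\tfrac{(n+1)(n+2)}{2}$, the paper writing $\widetilde{w}_{x}f\bigl(\tfrac{\pi }{n+1}\bigr)$ as a convex-type combination and bounding each term above, while you lower-bound each summand of the average. Your treatment of the second inequality via monotonicity of the supremum matches the paper's remark that it is evident; the exploratory detours in your write-up are harmless since the closing chain stands on its own.
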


\begin{proof}
The proof \ of the first inequality follows by the easy account 
\begin{eqnarray*}
\widetilde{w}_{x}f\left( \frac{\pi }{n+1}\right) &=&\frac{n+1}{\pi }%
\int_{0}^{\frac{\pi }{n+1}}\left\vert \psi _{x}\left( u\right) \right\vert
du\sum_{r=0}^{n}\frac{2\left( r+1\right) }{\left( n+1\right) \left(
n+2\right) } \\
&=&\frac{2}{n+1}\sum_{r=0}^{n}\frac{\left( r+1\right) \left( n+1\right) }{%
\pi \left( n+2\right) }\int_{0}^{\frac{\pi }{n+1}}\left\vert \psi _{x}\left(
u\right) \right\vert du \\
&\leq &\frac{2}{n+1}\sum_{r=0}^{n}\frac{r+1}{\pi }\int_{0}^{\frac{\pi }{r+1}%
}\left\vert \psi _{x}\left( u\right) \right\vert du \\
&=&2\frac{1}{n+1}\sum_{r=0}^{n}\widetilde{w}_{x}f\left( \frac{\pi }{r+1}%
\right) .
\end{eqnarray*}%
The second inequality is evident, therefore our proof is completed.
\end{proof}

\section{Proofs of the results}

\subsection{Proof of Theorem 1}

First, we prove the relation (\ref{2.51}). The main idea of the proof based
on the method used in \cite[Proof of Theorem 1]{LSz}. Let%
\begin{equation*}
\widetilde{T}_{n,A,B}f\left( x\right) -\widetilde{f}\left( x,\frac{\pi }{n+1}%
\right) =\sum\limits_{r=0}^{n}\sum\limits_{k=0}^{r}a_{n,r}b_{r,k}\widetilde{S%
}_{k}\left( x\right) -\widetilde{f}\left( x,\frac{\pi }{n+1}\right)
\end{equation*}%
\begin{eqnarray*}
&=&-\frac{1}{\pi }\int_{0}^{\frac{\pi }{n+1}}\psi _{x}\left( t\right)
\sum\limits_{r=0}^{n}\sum\limits_{k=0}^{r}a_{n,r}b_{r,k}\widetilde{D}%
_{k}\left( t\right) dt+\frac{1}{\pi }\int_{\frac{\pi }{n+1}}^{\pi }\psi
_{x}\left( t\right) \sum\limits_{r=0}^{n}\sum\limits_{k=0}^{r}a_{n,r}b_{r,k}%
\widetilde{D_{k}^{\circ }}\left( t\right) dt \\
&=&I_{1}+I_{2}.
\end{eqnarray*}%
Further, by Lemma 1,%
\begin{equation*}
\left\vert I_{1}\right\vert \leq \frac{1}{\pi }\int_{0}^{\frac{\pi }{n+1}%
}\left\vert \psi _{x}\left( t\right) \right\vert
\sum\limits_{r=0}^{n}\sum\limits_{k=0}^{r}a_{n,r}b_{r,k}\left\vert \frac{%
2\sin \frac{kt}{2}\sin \frac{\left( k+1\right) t}{2}}{2\sin \frac{t}{2}}%
\right\vert dt
\end{equation*}%
\begin{equation*}
\leq \frac{1}{n+1}\widetilde{w}_{x}f\left( \frac{\pi }{n+1}\right)
\sum\limits_{r=0}^{n}\sum\limits_{k=0}^{r}\left( k+1\right) a_{n,r}b_{r,k}
\end{equation*}

\begin{equation*}
\leq \widetilde{w}_{x}f\left( \frac{\pi }{n+1}\right)
\sum\limits_{r=0}^{n}a_{n,r}\leq \sum\limits_{r=0}^{n}a_{n,r}\widetilde{\bar{%
w}}_{x}f\left( \frac{\pi }{r+1}\right) 
\end{equation*}%
\begin{equation*}
\ll \sum\limits_{r=0}^{n}a_{n,r}\left[ \frac{1}{r+1}\sum\limits_{k=0}^{r}%
\widetilde{\bar{w}}_{x}f\left( \frac{\pi }{k+1}\right) \right] 
\end{equation*}%
and using the inequality: $\sin t\leq \frac{2t}{\pi }$ for $0<t\leq \frac{%
\pi }{2}$%
\begin{equation*}
\left\vert I_{2}\right\vert \leq \frac{1}{\pi }\int_{\frac{\pi }{n+1}}^{\pi
}\left\vert \psi _{x}\left( t\right)
\sum\limits_{r=0}^{n}\sum\limits_{k=0}^{r}a_{n,r}b_{r,k}\frac{\cos \frac{%
\left( 2k+1\right) t}{2}}{2\sin \frac{t}{2}}\right\vert dt
\end{equation*}%
\begin{equation*}
\ll \int_{\frac{\pi }{n+1}}^{\pi }\frac{\left\vert \psi _{x}\left( t\right)
\right\vert }{t}\left\vert
\sum\limits_{r=0}^{n}\sum\limits_{k=0}^{r}a_{n,r}b_{r,k}\cos \frac{\left(
2k+1\right) t}{2}\right\vert dt
\end{equation*}%
\begin{equation*}
\leq \int_{\frac{\pi }{n+1}}^{\pi }\frac{\left\vert \psi _{x}\left( t\right)
\right\vert }{t}\left\vert \sum\limits_{r=0}^{\tau
-1}\sum\limits_{k=0}^{r}a_{n,r}b_{r,k}\cos \frac{\left( 2k+1\right) t}{2}%
\right\vert dt
\end{equation*}%
\begin{equation*}
+\int_{\frac{\pi }{n+1}}^{\pi }\frac{\left\vert \psi _{x}\left( t\right)
\right\vert }{t}\left\vert \sum\limits_{r=\tau }^{n}\sum\limits_{k=0}^{\tau
-1}a_{n,r}b_{r,r-k}\cos \frac{\left( 2r-2k+1\right) t}{2}\right\vert dt
\end{equation*}%
\begin{equation*}
+\int_{\frac{\pi }{n+1}}^{\pi }\frac{\left\vert \psi _{x}\left( t\right)
\right\vert }{t}\left\vert \sum\limits_{r=\tau }^{n}\sum\limits_{k=\tau
}^{r}a_{n,r}b_{r,r-k}\cos \frac{\left( 2r-2k+1\right) t}{2}\right\vert dt
\end{equation*}%
\begin{equation*}
=I_{21}+I_{22}+I_{23},
\end{equation*}%
where $\tau =\left[ \frac{\pi }{t}\right] $ for $t\in (0,\pi ].$ Now, we
shall estimate the integrals $I_{21}$, $I_{22}$ and $I_{23}$ So, 
\begin{equation*}
I_{21}\leq \int_{\frac{\pi }{n+1}}^{\pi }\frac{\left\vert \psi _{x}\left(
t\right) \right\vert }{t}\sum\limits_{r=0}^{\tau
-1}\sum\limits_{k=0}^{r}a_{n,r}b_{r,k}dt
\end{equation*}%
\begin{equation*}
=\int_{\frac{\pi }{n+1}}^{\pi }\frac{\left\vert \psi _{x}\left( t\right)
\right\vert }{t}\sum\limits_{r=0}^{\tau -1}a_{n,r}dt\leq 2\pi \int_{\frac{%
\pi }{n+1}}^{\pi }\frac{\left\vert \psi _{x}\left( t\right) \right\vert }{%
t^{2}}\sum\limits_{r=0}^{\tau -1}\frac{a_{n,r}}{\tau +1}dt
\end{equation*}%
\begin{equation*}
=2\pi \sum\limits_{s=1}^{n}\int_{\frac{\pi }{s+1}}^{\frac{\pi }{s}}\frac{%
\left\vert \psi _{x}\left( t\right) \right\vert }{t^{2}}\sum\limits_{r=0}^{%
\tau -1}\frac{a_{n,r}}{\tau +1}dt\leq 2\pi
\sum\limits_{s=1}^{n}\sum\limits_{r=0}^{s}\frac{a_{n,r}}{s+1}\int_{\frac{\pi 
}{s+1}}^{\frac{\pi }{s}}\frac{\left\vert \psi _{x}\left( t\right)
\right\vert }{t^{2}}dt
\end{equation*}%
and integrating by parts%
\begin{eqnarray*}
I_{21} &\leq &2\pi \sum\limits_{s=1}^{n}\sum\limits_{r=0}^{s}\frac{a_{n,r}}{%
s+1}\left\{ \left[ \frac{1}{t^{2}}\int_{0}^{t}\left\vert \psi _{x}\left(
u\right) \right\vert du\right] _{t=\frac{\pi }{s+1}}^{\frac{\pi }{s}}+2\int_{%
\frac{\pi }{s+1}}^{\frac{\pi }{s}}\left[ \frac{1}{t^{3}}\int_{0}^{t}\left%
\vert \psi _{x}\left( u\right) \right\vert du\right] dt\right\}  \\
&=&2\pi \sum\limits_{s=0}^{n-1}\sum\limits_{r=0}^{s}\frac{a_{n,r}}{s+2}\left[
\frac{1}{t^{2}}\int_{0}^{t}\left\vert \psi _{x}\left( u\right) \right\vert du%
\right] _{t=\frac{\pi }{s+2}}^{\frac{\pi }{s+1}}+2\pi \sum\limits_{s=1}^{n}%
\frac{a_{n,s}}{s+1}\left[ \frac{1}{t^{2}}\int_{0}^{t}\left\vert \psi
_{x}\left( u\right) \right\vert du\right] _{t=\frac{\pi }{s+1}}^{\frac{\pi }{%
s}}
\end{eqnarray*}%
\begin{eqnarray*}
&&+4\pi \sum\limits_{s=1}^{n}\sum\limits_{r=0}^{s}\frac{a_{n,r}}{s+1}\int_{%
\frac{\pi }{s+1}}^{\frac{\pi }{s}}\left[ \frac{1}{t^{3}}\int_{0}^{t}\left%
\vert \psi _{x}\left( u\right) \right\vert du\right] dt \\
&=&2\pi \sum\limits_{s=0}^{n-1}\sum\limits_{r=0}^{s}\frac{a_{n,r}}{s+2}\left[
\frac{\widetilde{w}_{x}f\left( \frac{\pi }{s+1}\right) }{\frac{\pi }{s+1}}-%
\frac{\widetilde{w}_{x}f\left( \frac{\pi }{s+2}\right) }{\frac{\pi }{s+2}}%
\right] 
\end{eqnarray*}%
\begin{eqnarray*}
&&+2\pi \sum\limits_{s=1}^{n}\frac{a_{n,s}}{s+1}\left[ \frac{\widetilde{w}%
_{x}f\left( \frac{\pi }{s}\right) }{\frac{\pi }{s}}-\frac{\widetilde{w}%
_{x}f\left( \frac{\pi }{s+1}\right) }{\frac{\pi }{s+1}}\right]  \\
&&+4\pi \sum\limits_{s=1}^{n}\sum\limits_{r=0}^{s}\frac{a_{n,r}}{s+1}\int_{%
\frac{\pi }{s+1}}^{\frac{\pi }{s}}\left[ \frac{1}{t^{3}}\int_{0}^{t}\left%
\vert \psi _{x}\left( u\right) \right\vert du\right] dt
\end{eqnarray*}%
\begin{eqnarray*}
&\leq &2\pi \sum\limits_{s=0}^{n-1}\sum\limits_{r=0}^{s}\frac{a_{n,r}}{r+1}%
\left[ \frac{\widetilde{w}_{x}f\left( \frac{\pi }{s+1}\right) }{\frac{\pi }{%
s+1}}-\frac{\widetilde{w}_{x}f\left( \frac{\pi }{s+2}\right) }{\frac{\pi }{%
s+2}}\right] +2\pi \sum\limits_{s=1}^{n}\frac{a_{n,s}}{s+1}\frac{\widetilde{w%
}_{x}f\left( \frac{\pi }{s}\right) }{\frac{\pi }{s}} \\
&&+4\pi \sum\limits_{s=1}^{n}\sum\limits_{r=0}^{s}\frac{a_{n,r}}{s+1}\int_{%
\frac{\pi }{s+1}}^{\frac{\pi }{s}}\left[ \frac{1}{t^{3}}\int_{0}^{t}\left%
\vert \psi _{x}\left( u\right) \right\vert du\right] dt.
\end{eqnarray*}%
Changing the order of summation we get%
\begin{eqnarray*}
I_{21} &\ll &\sum\limits_{r=0}^{n-1}\frac{a_{n,r}}{r+1}\sum%
\limits_{s=r}^{n-1}\left[ \frac{\widetilde{w}_{x}f\left( \frac{\pi }{s+1}%
\right) }{\frac{\pi }{s+1}}-\frac{\widetilde{w}_{x}f\left( \frac{\pi }{s+2}%
\right) }{\frac{\pi }{s+2}}\right] +\sum\limits_{s=1}^{n}a_{n,s}\widetilde{w}%
_{x}f\left( \frac{\pi }{s}\right)  \\
&&+\sum\limits_{s=1}^{n}\sum\limits_{r=0}^{s}\frac{a_{n,r}}{s+1}\widetilde{w}%
_{x}f\left( \frac{\pi }{s}\right) \int_{\frac{\pi }{s+1}}^{\frac{\pi }{s}}%
\frac{1}{t^{2}}dt
\end{eqnarray*}%
\begin{eqnarray*}
&=&\sum\limits_{r=0}^{n-1}\frac{a_{n,r}}{r+1}\left[ \frac{\widetilde{w}%
_{x}f\left( \frac{\pi }{r+1}\right) }{\frac{\pi }{r+1}}-\frac{\widetilde{w}%
_{x}f\left( \frac{\pi }{n+1}\right) }{\frac{\pi }{n+1}}\right]
+\sum\limits_{s=1}^{n}a_{n,s}\widetilde{w}_{x}f\left( \frac{\pi }{s}\right) 
\\
&&+\sum\limits_{s=1}^{n}\sum\limits_{r=0}^{s}\frac{a_{n,r}}{s+1}\widetilde{w}%
_{x}f\left( \frac{\pi }{s}\right) \left( \frac{1}{\frac{\pi }{s+1}}-\frac{1}{%
\frac{\pi }{s}}\right) 
\end{eqnarray*}%
\begin{equation*}
\ll \sum\limits_{r=0}^{n-1}\frac{a_{n,r}}{r+1}\frac{\widetilde{w}_{x}f\left( 
\frac{\pi }{r+1}\right) }{\frac{\pi }{r+1}}+\sum\limits_{s=1}^{n}a_{n,s}%
\widetilde{w}_{x}f\left( \frac{\pi }{s}\right)
+\sum\limits_{s=1}^{n}\sum\limits_{r=0}^{s}\frac{a_{n,r}}{s+1}\widetilde{w}%
_{x}f\left( \frac{\pi }{s}\right) 
\end{equation*}%
\begin{eqnarray*}
&\leq &\frac{1}{\pi }\sum\limits_{r=0}^{n}a_{n,r}\widetilde{w}_{x}f\left( 
\frac{\pi }{r+1}\right) +\sum\limits_{s=1}^{n}\left(
a_{n,s}+\sum\limits_{r=0}^{s}\frac{a_{n,r}}{s+1}\right) \widetilde{w}%
_{x}f\left( \frac{\pi }{s}\right)  \\
&\ll &\sum\limits_{r=0}^{n}a_{n,r}\left[ \frac{1}{r+1}\sum\limits_{s=0}^{r}%
\widetilde{w}_{x}f\left( \frac{\pi }{s+1}\right) \right]  \\
&&+\sum\limits_{s=1}^{n}\left( a_{n,s}+\sum\limits_{r=0}^{s}\frac{a_{n,r}}{%
s+1}\right) \left[ \frac{1}{s}\sum\limits_{r=1}^{s}\widetilde{w}_{x}f\left( 
\frac{\pi }{r}\right) \right] 
\end{eqnarray*}%
\begin{eqnarray*}
&=&\sum\limits_{r=0}^{n}a_{n,r}\left[ \frac{1}{r+1}\sum\limits_{s=0}^{r}%
\widetilde{w}_{x}f\left( \frac{\pi }{s+1}\right) \right]  \\
&&+\sum\limits_{s=1}^{n}\left( a_{n,s}+\sum\limits_{r=0}^{s}\frac{a_{n,r}}{%
s+1}\right) \left[ \frac{1}{s}\sum\limits_{r=0}^{s-1}\widetilde{w}%
_{x}f\left( \frac{\pi }{r+1}\right) \right]  \\
&\ll &\sum\limits_{r=0}^{n}\left( a_{n,r}+\sum\limits_{s=0}^{r}\frac{a_{n,s}%
}{r+1}\right) \left[ \frac{1}{r+1}\sum\limits_{s=0}^{r}\widetilde{w}%
_{x}f\left( \frac{\pi }{s+1}\right) \right] .
\end{eqnarray*}%
By the assumption (\ref{2.2}) 
\begin{equation*}
I_{21}\ll \sum\limits_{r=0}^{n}a_{n,r}\left[ \frac{1}{r+1}%
\sum\limits_{s=0}^{r}\widetilde{w}_{x}f\left( \frac{\pi }{s+1}\right) \right]
\leq \sum\limits_{r=0}^{n}a_{n,r}\left[ \frac{1}{r+1}\sum\limits_{k=0}^{r}%
\widetilde{\bar{w}}_{x}f\left( \frac{\pi }{k+1}\right) \right] .
\end{equation*}%
Using Abel's transformation in $I_{22}$ and conditions (\ref{2.1}), and (\ref%
{2.21}), we obtain 
\begin{equation*}
I_{22}=\int_{\frac{\pi }{n+1}}^{\pi }\frac{\left\vert \psi _{x}\left(
t\right) \right\vert }{t}\left\vert \sum\limits_{k=0}^{\tau -1}\left(
\sum\limits_{r=\tau }^{n-1}\left[ a_{n,r}b_{r,r-k}-a_{n,r+1}b_{r+1,r+1-k}%
\right] \sum_{l=\tau }^{r}\cos \frac{\left( 2l-2k+1\right) t}{2}\right.
\right. 
\end{equation*}%
\begin{equation*}
+\left. \left. a_{n,n}b_{n,n-k}\sum_{l=\tau }^{n}\cos \frac{\left(
2l-2k+1\right) t}{2}\right) \right\vert dt
\end{equation*}%
\begin{equation*}
\leq \int_{\frac{\pi }{n+1}}^{\pi }\frac{\left\vert \psi _{x}\left( t\right)
\right\vert }{t}\sum\limits_{k=0}^{\tau -1}\left[ \sum\limits_{r=\tau
}^{n-1}\left\vert a_{n,r}b_{r,r-k}-a_{n,r+1}b_{r+1,r+1-k}\right\vert \tau
+a_{n,n}b_{n,n-k}\tau \right] dt
\end{equation*}%
\begin{equation*}
\leq \int_{\frac{\pi }{n+1}}^{\pi }\frac{\left\vert \psi _{x}\left( t\right)
\right\vert }{t}\sum\limits_{k=0}^{\tau }\tau \left[ \sum\limits_{r=\tau
}^{n-1}\frac{a_{n,r}}{\left( r+1\right) ^{2}}+a_{n,n}b_{n,n-k}\right] dt
\end{equation*}%
\begin{equation*}
\leq \int_{\frac{\pi }{n+1}}^{\pi }\frac{\left\vert \psi _{x}\left( t\right)
\right\vert }{t}\tau \left[ \left( \tau +1\right) \sum\limits_{r=\tau }^{n-1}%
\frac{a_{n,r}}{\left( r+1\right) ^{2}}+a_{n,n}\sum\limits_{k=0}^{\tau
}b_{n,n-k}\right] dt
\end{equation*}%
\begin{equation*}
\leq \pi \int_{\frac{\pi }{n+1}}^{\pi }\frac{\left\vert \psi _{x}\left(
t\right) \right\vert }{t^{2}}\left[ \sum\limits_{r=\tau }^{n-1}\frac{a_{n,r}%
}{r+1}+a_{n,n}\right] dt
\end{equation*}%
\begin{equation*}
=\pi \int_{\frac{\pi }{n+1}}^{\pi }\frac{\left\vert \psi _{x}\left( t\right)
\right\vert }{t^{2}}\sum\limits_{r=\tau }^{n-1}\frac{a_{n,r}}{r+1}dt+\pi
a_{n,n}\int_{\frac{\pi }{n+1}}^{\pi }\frac{\left\vert \psi _{x}\left(
t\right) \right\vert }{t^{2}}dt
\end{equation*}%
\begin{equation*}
\leq \pi \int_{\frac{\pi }{n+1}}^{\pi }\frac{\left\vert \psi _{x}\left(
t\right) \right\vert }{t^{2}}\sum\limits_{r=\tau }^{n}\frac{a_{n,r}}{r+1}%
dt+\pi a_{n,n}\int_{\frac{\pi }{n+1}}^{\pi }\frac{\left\vert \psi _{x}\left(
t\right) \right\vert }{t^{2}}dt
\end{equation*}%
\begin{equation*}
\leq \pi \sum\limits_{s=1}^{n}\int_{\frac{\pi }{s+1}}^{\frac{\pi }{s}}\frac{%
\left\vert \psi _{x}\left( t\right) \right\vert }{t^{2}}\sum\limits_{r=\tau
}^{n}\frac{a_{n,r}}{r+1}dt+\frac{\pi }{n+1}\int_{\frac{\pi }{n+1}}^{\pi }%
\frac{\left\vert \psi _{x}\left( t\right) \right\vert }{t^{2}}dt.
\end{equation*}%
Since the sequence $\left( \sum\limits_{r=k}^{n}\frac{a_{n,r}}{r+1}\right) $
is nonincreasing in $k,$ after changing the order of summation, we have 
\begin{equation*}
I_{22}\leq \pi \sum\limits_{s=1}^{n}\sum\limits_{r=s}^{n}\frac{a_{n,r}}{r+1}%
\int_{\frac{\pi }{s+1}}^{\frac{\pi }{s}}\frac{\left\vert \psi _{x}\left(
t\right) \right\vert }{t^{2}}dt+\frac{\pi }{n+1}\int_{\frac{\pi }{n+1}}^{\pi
}\frac{\left\vert \psi _{x}\left( t\right) \right\vert }{t^{2}}dt
\end{equation*}%
\begin{equation*}
\leq \pi \sum\limits_{s=1}^{n}\sum\limits_{r=s}^{n}\frac{a_{n,r}}{r+1}%
\left\{ \left[ \frac{1}{t^{2}}\int_{0}^{t}\left\vert \psi _{x}\left(
u\right) \right\vert du\right] _{t=\frac{\pi }{s+1}}^{\frac{\pi }{s}}+2\int_{%
\frac{\pi }{s+1}}^{\frac{\pi }{s}}\left[ \frac{1}{t^{3}}\int_{0}^{t}\left%
\vert \psi _{x}\left( u\right) \right\vert du\right] dt\right\} 
\end{equation*}%
\begin{equation*}
+\frac{\pi }{n+1}\left[ \frac{\widetilde{w}_{x}f\left( \pi \right) }{\pi }%
+2\int_{\frac{\pi }{n+1}}^{\pi }\left[ \frac{1}{t^{3}}\int_{0}^{t}\left\vert
\psi _{x}\left( u\right) \right\vert du\right] dt\right] 
\end{equation*}%
\begin{equation*}
\ll \sum\limits_{s=1}^{n}\sum\limits_{r=s}^{n}\frac{a_{n,r}}{r+1}\left\{ %
\left[ \frac{\widetilde{w}_{x}f\left( \frac{\pi }{s}\right) }{\frac{\pi }{s}}%
-\frac{\widetilde{w}_{x}f\left( \frac{\pi }{s+1}\right) }{\frac{\pi }{s+1}}%
\right] +\widetilde{w}_{x}f\left( \frac{\pi }{s}\right) \int_{\frac{\pi }{s+1%
}}^{\frac{\pi }{s}}\frac{1}{t^{2}}dt\right\} 
\end{equation*}%
\begin{equation*}
+\frac{1}{n+1}\sum\limits_{s=1}^{n}\widetilde{w}_{x}f\left( \frac{\pi }{s+1}%
\right) 
\end{equation*}%
\begin{equation*}
\ll \sum\limits_{r=1}^{n}\frac{a_{n,r}}{r+1}\sum\limits_{s=1}^{r}\left\{ %
\left[ \frac{\widetilde{w}_{x}f\left( \frac{\pi }{s}\right) }{\frac{\pi }{s}}%
-\frac{\widetilde{w}_{x}f\left( \frac{\pi }{s+1}\right) }{\frac{\pi }{s+1}}%
\right] +\widetilde{w}_{x}f\left( \frac{\pi }{s}\right) \right\} 
\end{equation*}%
\begin{equation*}
+\frac{1}{n+1}\sum\limits_{s=1}^{n}\widetilde{w}_{x}f\left( \frac{\pi }{s+1}%
\right) 
\end{equation*}%
\begin{equation*}
=\sum\limits_{r=1}^{n}\frac{a_{n,r}}{r+1}\left\{ \left[ \frac{\widetilde{w}%
_{x}f\left( \pi \right) }{\pi }-\frac{\widetilde{w}_{x}f\left( \frac{\pi }{%
r+1}\right) }{\frac{\pi }{r+1}}\right] +\sum\limits_{s=1}^{r}\widetilde{w}%
_{x}f\left( \frac{\pi }{s}\right) \right\} 
\end{equation*}%
\begin{equation*}
+\frac{1}{n+1}\sum\limits_{s=1}^{n}\widetilde{w}_{x}f\left( \frac{\pi }{s+1}%
\right) 
\end{equation*}%
\begin{equation*}
\leq \sum\limits_{r=1}^{n}\frac{a_{n,r}}{r+1}\left\{ \frac{\widetilde{w}%
_{x}f\left( \pi \right) }{\pi }+\sum\limits_{s=1}^{r}\widetilde{w}%
_{x}f\left( \frac{\pi }{s}\right) \right\} +\frac{1}{n+1}\sum%
\limits_{s=1}^{n}\widetilde{w}_{x}f\left( \frac{\pi }{s+1}\right) 
\end{equation*}%
\begin{equation*}
\ll \sum\limits_{r=1}^{n}\frac{a_{n,r}}{r+1}\sum\limits_{s=0}^{r}\widetilde{w%
}_{x}f\left( \frac{\pi }{s+1}\right) +\frac{1}{n+1}\sum\limits_{s=0}^{n}%
\widetilde{w}_{x}f\left( \frac{\pi }{s+1}\right) 
\end{equation*}%
\begin{equation*}
\ll \sum\limits_{r=0}^{n}a_{n,r}\frac{1}{r+1}\sum\limits_{s=0}^{r}\widetilde{%
\bar{w}}_{x}f\left( \frac{\pi }{s+1}\right) 
\end{equation*}%
and 
\begin{equation*}
I_{23}\leq \int_{\frac{\pi }{n+1}}^{\pi }\frac{\left\vert \psi _{x}\left(
t\right) \right\vert }{t}\left\vert \sum\limits_{r=\tau
}^{n}\sum\limits_{k=\tau }^{r}a_{n,r}b_{r,r-k}\cos \frac{\left(
2r-2k+1\right) t}{2}\right\vert dt
\end{equation*}%
\begin{equation*}
=\int_{\frac{\pi }{n+1}}^{\pi }\frac{\left\vert \psi _{x}\left( t\right)
\right\vert }{t}\left\vert \sum\limits_{k=\tau
}^{n}\sum\limits_{r=k}^{n}a_{n,r}b_{r,r-k}\cos \frac{\left( 2r-2k+1\right) t%
}{2}\right\vert dt
\end{equation*}%
\begin{equation*}
=\int_{\frac{\pi }{n+1}}^{\pi }\frac{\left\vert \psi _{x}\left( t\right)
\right\vert }{t}\left\vert \sum\limits_{k=\tau }^{n}\left[
\sum\limits_{r=k}^{n-1}\left( a_{n,r}b_{r,r-k}-a_{n,r+1}b_{r+1,r+1-k}\right)
\sum_{l=k}^{r}\cos \frac{\left( 2l-2k+1\right) t}{2}\right. \right. 
\end{equation*}%
\begin{equation*}
+\left. \left. a_{n,n}b_{n,n-k}\sum_{l=k}^{n}\cos \frac{\left(
2l-2k+1\right) t}{2}\right] \right\vert dt
\end{equation*}%
\begin{equation*}
\leq \int_{\frac{\pi }{n+1}}^{\pi }\frac{\left\vert \psi _{x}\left( t\right)
\right\vert }{t}\sum\limits_{k=\tau }^{n}\left[ \sum\limits_{r=k}^{n-1}\left%
\vert a_{n,r}b_{r,r-k}-a_{n,r+1}b_{r+1,r+1-k}\right\vert \tau
+a_{n,n}b_{n,n-k}\tau \right] dt
\end{equation*}%
\begin{equation*}
\leq \int_{\frac{\pi }{n+1}}^{\pi }\frac{\left\vert \psi _{x}\left( t\right)
\right\vert }{t}\tau \left[ \sum\limits_{r=\tau }^{n}\sum\limits_{k=\tau
}^{r}\left\vert a_{n,r}b_{r,r-k}-a_{n,r+1}b_{r+1,r+1-k}\right\vert
+a_{n,n}\sum\limits_{k=\tau }^{n}b_{n,n-k}\right] dt
\end{equation*}%
\begin{equation*}
\ll \int_{\frac{\pi }{n+1}}^{\pi }\frac{\left\vert \psi _{x}\left( t\right)
\right\vert }{t^{2}}\left[ \sum\limits_{r=\tau }^{n}\sum\limits_{k=\tau }^{r}%
\frac{a_{n,r}}{\left( r+1\right) ^{2}}+a_{n,n}\sum\limits_{k=0}^{n-\tau
}b_{n,k}\right] dt
\end{equation*}%
\begin{eqnarray*}
&\leq &\int_{\frac{\pi }{n+1}}^{\pi }\frac{\left\vert \psi _{x}\left(
t\right) \right\vert }{t^{2}}\left[ \sum\limits_{r=\tau }^{n}\frac{a_{n,r}}{%
\left( r+1\right) ^{2}}\left( r-\tau +1\right)
+a_{n,n}\sum\limits_{k=0}^{n}b_{n,k}\right] dt \\
&\leq &\int_{\frac{\pi }{n+1}}^{\pi }\frac{\left\vert \psi _{x}\left(
t\right) \right\vert }{t^{2}}\left[ \sum\limits_{r=\tau }^{n}\frac{a_{n,r}}{%
r+1}+a_{n,n}\right] dt.
\end{eqnarray*}%
Further, the same calculation, as that in the estimate of $I_{22}$ , gives
the inequality%
\begin{equation*}
I_{23}\ll \sum\limits_{r=0}^{n}a_{n,r}\frac{1}{r+1}\sum\limits_{s=0}^{r}%
\widetilde{\bar{w}}_{x}f\left( \frac{\pi }{s+1}\right) .
\end{equation*}%
Collecting these estimates we obtain the desired estimate.

Now, we prove the relation (\ref{2.5}). Let%
\begin{equation*}
\widetilde{T}_{n,A,B}\left( x\right) -\widetilde{f}\left( x\right)
=\sum\limits_{r=0}^{n}\sum\limits_{k=0}^{r}a_{n,r}b_{r,k}\widetilde{S}%
_{k}f\left( x\right) -\widetilde{f}\left( x\right)
\end{equation*}%
\begin{eqnarray*}
&=&\frac{1}{\pi }\int_{0}^{\pi }\psi _{x}\left( t\right)
\sum\limits_{r=0}^{n}\sum\limits_{k=0}^{r}a_{n,r}b_{r,k}\widetilde{D}%
_{k}^{\circ }\left( t\right) dt \\
&=&\left( \frac{1}{\pi }\int_{0}^{\frac{\pi }{n+1}}+\frac{1}{\pi }\int_{%
\frac{\pi }{n+1}}^{\pi }\right) \psi _{x}\left( t\right)
\sum\limits_{r=0}^{n}\sum\limits_{k=0}^{r}a_{n,r}b_{r,k}\widetilde{D}%
_{k}^{\circ }\left( t\right) dt \\
&=&J_{1}+J_{2}.
\end{eqnarray*}%
Further%
\begin{eqnarray*}
\left\vert J_{1}\right\vert &\leq &\frac{1}{\pi }\int_{0}^{\frac{\pi }{n+1}%
}\left\vert \psi _{x}\left( t\right) \right\vert \left\vert
\sum\limits_{r=0}^{n}\sum\limits_{k=0}^{r}a_{n,r}b_{r,k}\frac{\cos \frac{%
\left( 2k+1\right) t}{2}}{2\sin \frac{t}{2}}\right\vert dt \\
&\leq &\frac{1}{2}\int_{0}^{\frac{\pi }{n+1}}\frac{\left\vert \psi
_{x}\left( t\right) \right\vert }{t}\left\vert
\sum\limits_{r=0}^{n}\sum\limits_{k=0}^{r}a_{n,r}b_{r,k}\cos \frac{\left(
2k+1\right) t}{2}\right\vert dt \\
&\leq &\frac{1}{2}\int_{0}^{\frac{\pi }{n+1}}\frac{\left\vert \psi
_{x}\left( t\right) \right\vert }{t}dt\ll \widetilde{w}_{x}f\left( \frac{\pi 
}{n+1}\right)
\end{eqnarray*}

\begin{equation*}
=\widetilde{w}_{x}f\left( \frac{\pi }{n+1}\right)
\sum\limits_{r=0}^{n}a_{n,r}\leq \sum\limits_{r=0}^{n}a_{n,r}\widetilde{\bar{%
w}}_{x}f\left( \frac{\pi }{r+1}\right)
\end{equation*}%
\begin{equation*}
\ll \sum\limits_{r=0}^{n}a_{n,r}\left[ \frac{1}{r+1}\sum\limits_{k=0}^{r}%
\widetilde{\bar{w}}_{x}f\left( \frac{\pi }{k+1}\right) \right]
\end{equation*}%
and as above%
\begin{equation*}
\left\vert J_{2}\right\vert \ll \sum\limits_{r=0}^{n}a_{n,r}\left[ \frac{1}{%
r+1}\sum\limits_{k=0}^{r}\widetilde{\bar{w}}_{x}f\left( \frac{\pi }{k+1}%
\right) \right] .
\end{equation*}

$\blacksquare $

\subsection{Proof of Theorems 2}

Theorem 2 is special case of Theorem 1, therefore, analogously as in \cite[%
Proof of Theorem 2]{LSz}, we can immediately obtain our estimates.$%
\blacksquare $

\subsection{Proofs of Theorems 3 and 4}

The proofs are similar to the above. The results follow by (\ref{81}) and (%
\ref{2.6}). $\blacksquare $

\end{document}